\newcommand{\ol}[1]{\mkern 1.7mu\overline{\mkern-1.7mu#1\mkern-1.7mu}\mkern 1.7mu}
\def\R{\mathbb{R}}
\def\N{\mathbb{N}}
\def\O{\Omega}
\def\P{\mathcal{P}}
\def\p{\partial}
\def\theta{{\vartheta}}
\def\phi{{\varphi}}
\def\eps{{\varepsilon}}
\DeclareMathOperator{\dist}{dist}
\DeclareMathOperator{\divergence}{div}
\DeclareMathOperator{\graph}{graph}
\newtheorem{theorem}{Theorem}[section]
\newtheorem{lemma}[theorem]{Lemma}
\newtheorem{proposition}[theorem]{Proposition}
\theoremstyle{definition}
\newtheorem{definition}[theorem]{Definition}
\newtheorem{remark}[theorem]{Remark}
\newtheorem{example}[theorem]{Example}
\numberwithin{equation}{section}
\begin{document}

\title[Shadows of graphical mean curvature flow]{Shadows of graphical mean curvature flow}

% Information for first author
\author{Wolfgang Maurer}

\address{Wolfgang Maurer, Fachbereich Mathematik und Statistik, Universit\"at Konstanz, 78457 Konstanz, Germany}

%\email{wolfgang.maurer@uni-konstanz.de}

%\urladdr{www.math.uni-konstanz.de/~maurer/}

%\thanks{}

%\date{\today.}

%\dedicatory{}

%\keywords{}

\subjclass[2010]{53C44}

\begin{abstract}
We consider mean curvature flow of an initial surface that is the graph of a function over some domain of definition in $\R^n$.
If the graph is not complete then we impose a constant Dirichlet boundary condition at the boundary of the surface.
We establish longtime-existence of the flow and investigate the projection of the flowing surface onto $\R^n$, the shadow of the flow.
This moving shadow can be seen as a weak solution for mean curvature flow of hypersurfaces in $\R^n$ with a Dirichlet boundary condition.

Furthermore, we provide a lemma of independent interest to locally mollify the boundary of an intersection of two smooth open sets in a way that respects curvature conditions.
\end{abstract}

\maketitle

%\tableofcontents

%-----------------------   mainmatter   ------------------------------------------

\allowdisplaybreaks

\section{Introduction}

A family $(M_t)_{t\in (0,T)}$ of hypersurfaces of $\R^{n+1}$ is said to move by mean curvature flow if there is a map $X\colon M\times (0,T)\to \R^{n+1}$ such that $X(\cdot,t)$ is an immersion for all $t\in (0,T)$ with $X(M,t)=M_t$ and $X$ solves
\[
	\frac{d}{dt}X(p,t)= -H(p,t)\, \nu(p,t),
\]
where $M$ is a $n$-dimensional manifold, $H(\cdot,t)$ is the mean curvature of $M_t$ and $\nu(\cdot,t)$ its normal, such that $-H\nu$ is the mean curvature vector.
If $M_t=\graph u(\cdot,t)$ for a family of functions $u(\cdot,t)\colon \O\subset\R^n \to \R$, then $M_t$ moves by mean curvature flow if and only if $u$ solves the graphical mean curvature flow equation, which is the parabolic partial differential equation
\begin{equation} \label{eq GMCF} \tag{GMCF}
  \frac{d}{dt}u = \sqrt{1+|Du|^2}\,\divergence\left(\frac{Du}{\sqrt{1+|Du|^2}}\right).
\end{equation}

Graphical mean curvature flow was studied in \cite{E+H} by Ecker and Huisken. They proved long-time existence for the mean curvature flow of entire graphs and showed that the solution stays graphical for all time.
More recently, S\'{a}ez Trumper and Schn\"urer proved in \cite{MCF without Singularities} a long-time existence result for complete graphs. Starting from an open set $\O_0$ and a proper function $u_0\colon \O\to \R_+$, they showed the existence of a solution $u$ to graphical mean curvature flow with initial data $u_0$, where $u(\cdot,t)$ is defined on an open set $\O_t$ for $t\geq 0$. This solution will not develop singularities on a finite level but it can disappear to infinity forming a singularity at infinity-level. It was observed that the sets $\p \O_t$ can be interpreted as a weak solution to mean curvature flow, starting from $\p \O_0$, and that it coincides almost everywhere with the level-set flow as long as the latter does not fatten (Fig.\,\ref{GraphBall}).
\begin{figure}
  \centering
  \setlength{\unitlength}{1cm}
  \begin{picture}(10,5)
    \put(2,0){\includegraphics[height=5cm]{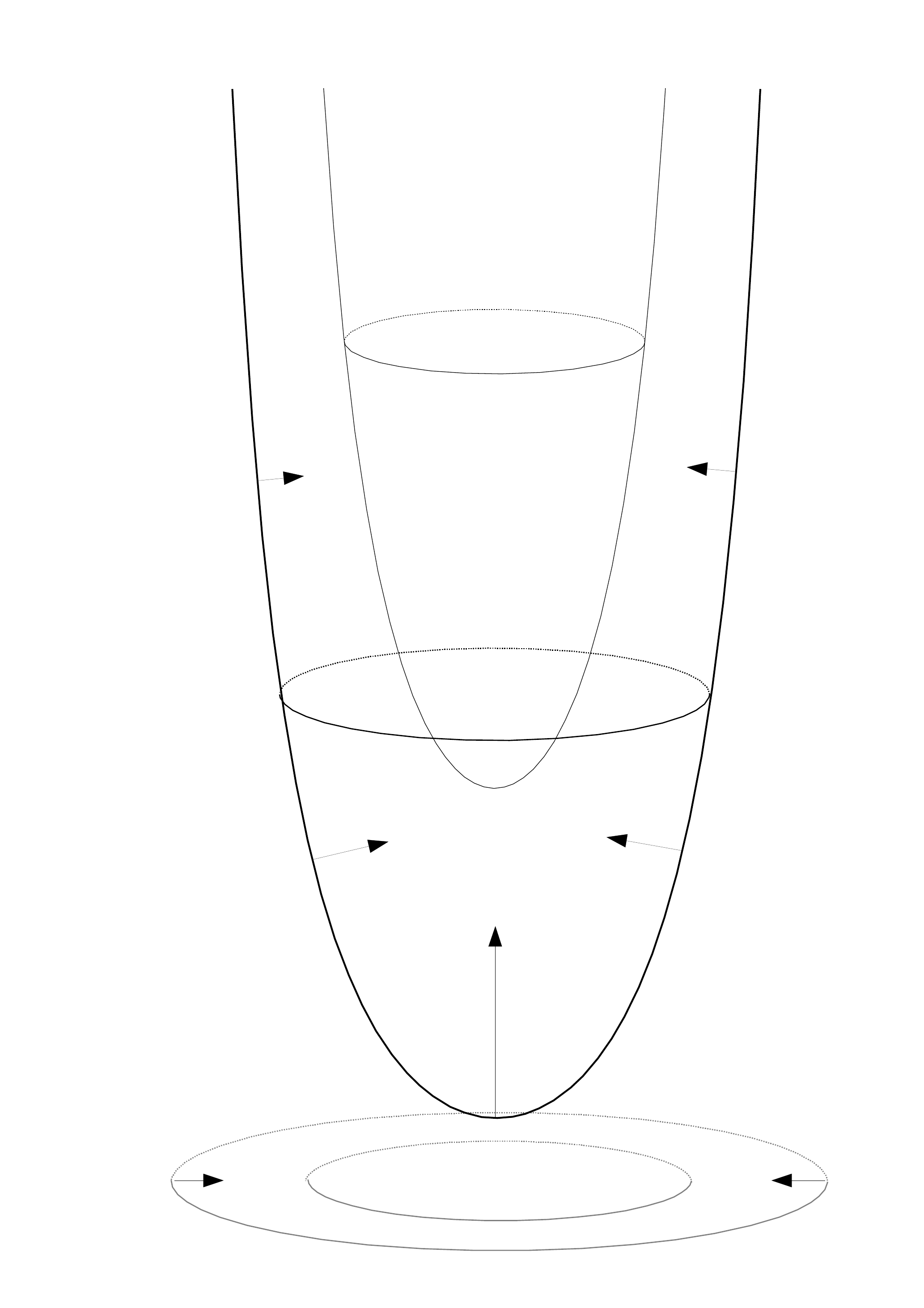}}
    \put(5.5,4){graph}
    \put(5.6,0.4){shadow}
  \end{picture}
  \caption{A rotationally symmetric ``shadowflow'' where we consider a graph over a ball. The graph moves by mean curvature flow and the shadow evolves by mean curvature flow too, but in a weak sense. In this setting the graph will disappear to infinity in finite time, while the shadow develops a point singularity. Generally, any singularity occuring on the shadow-level will happen at infinity on the graph-level.}
  \label{GraphBall}
\end{figure}
In \cite{Q_k} and \cite{K^alpha} existence results analogous to that of S\'{a}ez and Schn\"urer are proven for some fully nonlinear flows of non-compact convex surfaces.

In this article we consider graphical mean curvature flow with a Dirichlet boundary condition.
The existence result of this article reads as follows.
\begin{theorem} \label{thm existence}
 Let $\O\subset \R^n$ be open, smooth, and mean convex, that is, its boundary $\p \O\in C^\infty$ has nonnegative mean curvature $H[\p \O]\geq 0$.
 Let $u_0\colon \ol{\O} \to [-\infty,\infty]$ be continuous and assume $u_0$ is locally Lipschitz in the set $\{x\in \ol{\O}\colon |u_0(x)|<\infty\}$ and $u_0|_{\p \O}$ is of class $C^2$ in $\{x\in \p\O \colon |u_0(x)|<\infty\}$.
 Then there is a continuous function $u\colon \ol{\O}\times [0,\infty) \to [-\infty,\infty]$ which is smooth on $\big(\O\times (0,\infty)\big)\cap\{|u|<\infty\}$ and solves
 \[
  \begin{cases}
   \dot{u}= \sqrt{1+|Du|^2}\, \divergence\left(\frac{Du}{\sqrt{1+|Du|^2}}\right)
   & \text{in } \big(\O\times (0,\infty)\big)\cap\{|u|<\infty\},\\
   u(x,t)= u_0(x)
   & \text{for } (x,t)\in \P(\O\times (0,\infty)),
  \end{cases}
 \]
 where $\P(\O\times (0,\infty)):= (\O\times \{0\})\cup (\p \O\times [0,\infty))$.
\end{theorem}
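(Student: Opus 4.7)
\emph{Strategy.} The plan is to solve the problem by approximation in three stages: first treat smooth, finite initial data by classical parabolic theory; then approximate general continuous but finite $u_0$ by mollification and pass to the limit; finally handle the possibility that $u_0$ takes the values $\pm\infty$ by monotone truncation and another passage to the limit.

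\emph{The three stages.} For $u_0\in C^{\infty}(\overline{\Omega})$ with $|u_0|<\infty$, I would invoke the classical long-time existence theorem for graphical mean curvature flow with Dirichlet boundary data on a mean convex domain: the assumption $H[\partial\Omega]\geq 0$ provides the one-sided barrier $\partial\Omega\times\R$ needed for a boundary gradient estimate, an $L^{\infty}$-bound on $u$ follows by comparison with constant functions, and standard parabolic regularity then produces a smooth solution on $\overline{\Omega}\times[0,\infty)$. For general continuous finite $u_0$, mollify to obtain $u_0^k\in C^{\infty}(\overline{\Omega})$ converging locally uniformly to $u_0$, with controlled local Lipschitz bounds in $\Omega$ and controlled $C^2$-bounds on $\partial\Omega$. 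Solving each approximate problem yields $u^k$, and the comparison principle gives $\|u^k-u^j\|_{L^{\infty}}\leq \|u_0^k-u_0^j\|_{L^{\infty}(\overline{\Omega})}$, so $u^k$ converges uniformly to a continuous limit $u$; the interior gradient estimates of Ecker--Huisken then upgrade the convergence to $C^{\infty}_{\mathrm{loc}}$ on $\Omega\times(0,\infty)$, ensuring $u$ solves \eqref{eq GMCF} classically. For the full statement, truncate $u_0$ to the monotone families $u_0^{+,M}:=\max(u_0,-M)$ and $u_0^{-,M}:=\min(u_0,M)$, solve each problem by the preceding step, and define $u$ as the monotone limit as $M\to\infty$; comparison with these approximations will be used to force the two one-sided limits to coincide and to produce a solution of the original problem.

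\emph{Main obstacle.} The decisive difficulty is establishing continuity of $u$ at points of the parabolic boundary $\mathcal{P}(\Omega\times(0,\infty))$ where $u_0=\pm\infty$: no uniform $L^{\infty}$-bounds hold there, so one must construct local barriers that themselves diverge along the singular set, forcing $u(x,t)\to\pm\infty$ as $(x,t)$ approaches such a point. I would build these barriers from translating graphical solutions and from vertical cylinders $\partial U\times\R$ over mean convex subdomains $U\Subset\Omega$ on which $u_0\to\pm\infty$, arranging them to trap the approximate solutions $u^M$ in a way that is preserved under the limit $M\to\infty$; exactly here the mean convexity of $\Omega$ enters a second time, since it lets pieces of $\partial\Omega\times\R$ themselves serve as one-sided barriers. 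A secondary technical point is the uniform-in-$k$ boundary gradient estimate for the $u^k$ in the region where $u_0$ is finite, which combines mean convexity with the $C^2$-control of $u_0|_{\partial\Omega}$ via standard explicit upper and lower barriers adapted to the boundary data.
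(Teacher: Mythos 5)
Your overall plan (approximate by nicer problems, use mean convexity for boundary gradient estimates, Ecker--Huisken interior estimates to upgrade convergence) is in the spirit of the paper's proof, but as it stands it has two genuine gaps. First, $\O$ is \emph{not} assumed bounded -- the theorem is explicitly meant to cover $\O=\R^n$ -- whereas the classical long-time existence theorem for the graphical Dirichlet problem that you invoke in stage one (Huisken's non-parametric result) is a theorem about bounded mean convex domains; moreover a finite continuous $u_0$ may be unbounded, so comparison with constants gives no $L^\infty$ bound, and your global Cauchy estimate $\|u^k-u^j\|_{L^\infty}\leq\|u_0^k-u_0^j\|_{L^\infty(\ol{\O})}$ need not tend to zero. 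Some exhaustion of $\O$ by bounded domains is indispensable, and it is not innocent: intersecting $\O$ with a ball destroys smoothness and, a priori, mean convexity at the corner $\p\O\cap\p B_{2R}$. This is exactly why the paper proves and uses Theorem \ref{thm smoothing}, producing bounded smooth domains $\O\cap B_R\subset\O_R\subset\O\cap B_{2R}$ with $H[\p\O_R]\geq 0$ on which the auxiliary Dirichlet problems can be solved. Your proposal contains no substitute for this step.

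Second, your stage three does not reduce to stage two: if $u_0$ attains both $+\infty$ and $-\infty$, the one-sided truncations $\max(u_0,-M)$ and $\min(u_0,M)$ are still infinite somewhere, so you must truncate on both sides, and then the monotonicity your limit argument relies on is lost; and even where a monotone limit exists, monotone limits of continuous functions need not be continuous, while the assertion requires $u$ to be continuous as a map into $[-\infty,\infty]$ on all of $\ol{\O}\times[0,\infty)$, including interior points at positive times where $u$ becomes infinite. What replaces this in the paper is a compactness argument based on a priori estimates that are local in space, time \emph{and height}: the local boundary gradient estimate (Lemma \ref{lem gradient bounds}), the local $L^\infty$ bound near the boundary (Lemma \ref{lem sup bound}) -- which is more than the ``secondary technical point'' you mention, since near boundary points where $u_0|_{\p\O}$ is infinite there is no sup bound to feed into a gradient estimate, and the paper has to build the barrier $(v-x^n)^{-1}$ from an auxiliary solution over the graph of $\p\O$, using $H[\p\O]\geq 0$ -- together with Ecker--Huisken interior estimates and H\"older-in-time bounds from sphere barriers; one then composes with a homeomorphism $\Phi\colon[-\infty,\infty]\to[-1,1]$ and applies Arzel\`a--Ascoli to $\Phi\circ u_R$. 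Your barrier ideas at points where $u_0=\pm\infty$ point in the right direction, but as formulated they neither handle the doubly-infinite case nor yield the required equicontinuity, so the passage to the limit and the continuity of $u$ remain unproved.
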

The condition of nonnegative mean curvature for the boundary $\p \O$ is necessary to expect longtime existence for the graphical mean curvature flow when considering the Dirichlet problem for general boundary data. Otherwise, the gradient of $u$ may become unbounded in finite time and the flowing surface may cease to be graphical.

Note that for $\O= \R^n$ Theorem \ref{thm existence} is a generalization of the result in \cite{MCF without Singularities} because we do not need the assumptions $u_0$ proper and $u_0\geq 0$.

The proof of Theorem \ref{thm existence} is the subject of Section \ref{sec existence}. It involves an approximation of the problem by bounded auxiliary problems and uses an Arzel\`{a}-Ascoli-argument and a priori estimates to pass to a limit.

In Section \ref{sec shadow-flow}, we define the ``shadow'' at time $t$ to be the set $$\{x\in \R^n \colon |u(x,t)|<\infty\},$$ and interpret this as a weak solution to mean curvature flow of hypersurfaces in $\O$ with Dirichlet boundary condition on $\p \O$ using the notion of avoidance principle as defined in Section \ref{sec shadow-flow}. Roughly, the definition of weak solution implies that any classical solution starting inside the shadow stays inside and any classical solution starting outside stays outside.

An intersection of two smooth open sets is in general not smooth at the intersection of the boundaries.
In Section \ref{sec smoothing} we provide a lemma to locally mollify intersections of smooth open sets at the intersections of their boundaries while preserving certain curvature conditions.
\begin{theorem} \label{thm smoothing}
 Let $A,B\subset \R^n$ be open, $\p A,\p B \in C^\infty$ and suppose $A\cap B\neq \emptyset$ is bounded.
 Then for any $\eps>0$ there is an open set $\O$ with $\p \O \in C^\infty$ such that
 \[
  (A\cap B)\setminus(\p A\cap \p B)_\eps \subset \O \subset A\cap B,
 \]
 where $(\p A\cap \p B)_\eps:= \{x\in \R^n\colon \dist(x,\p A \cap \p B)<\eps\}$.

 Moreover, if the principal curvatures at every point of $\p A$ and $\p B$ lie in a symmetric, open or closed, convex cone $\Gamma\subset \R^{n-1}$ which contains the positive cone $\Gamma_+\subset \Gamma$, then $\O$ can be chosen such that the principal curvatures at any point of $\p \O$ lie in $\Gamma$, too. Here symmetric means invariant when interchanging $\kappa_i\leftrightarrow \kappa_j$.
\end{theorem}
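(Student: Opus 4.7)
The plan is to realize $\Omega$ as a super-level set
\[
\Omega = \{x \in \R^n : m_\delta(f_A(x), f_B(x)) > 0\},
\]
where $f_A, f_B$ are globally smooth defining functions for $A, B$ (positive in the respective set, agreeing with the signed distance functions $d_A, d_B$ in tubular neighborhoods of $\partial A, \partial B$), and $m_\delta\colon \R^2 \to \R$ is a ``smoothed minimum.'' A concrete choice is $m_\delta(s,t) = \tfrac{1}{2}(s + t - \rho(s-t))$, where $\rho \in C^\infty(\R)$ is even, convex, $\rho(r) = |r|$ for $|r| \geq \delta$, and $\rho(0) \in (0, \delta)$. Then $m_\delta$ is smooth on $\R^2$ with $m_\delta \leq \min(s,t)$, $m_\delta(s,t) = \min(s,t)$ whenever $|s-t| \geq \delta$, and $\{m_\delta > 0\}\subset\R^2$ is the open first quadrant with the corner at the origin rounded off by a smooth arc.

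The two set inclusions follow quickly. From $m_\delta \leq \min$ we get $\Omega \subset A\cap B$. Conversely, if $x\in A\cap B$ and $m_\delta(f_A, f_B)(x) \leq 0$, then necessarily $|f_A(x)-f_B(x)|<\delta$ and $f_A(x)+f_B(x) \leq \rho(f_A(x)-f_B(x)) \leq \delta$, so $0 < f_A(x), f_B(x) \leq \delta$. A compactness argument on $\overline{A\cap B}$ (any sequence $x_k$ with $f_A(x_k), f_B(x_k) \to 0$ has a subsequential limit in $\partial A\cap \partial B$) shows that for $\delta = \delta(\varepsilon) > 0$ sufficiently small, $\{x\in\overline{A\cap B}: f_A(x), f_B(x)\leq \delta\} \subset (\partial A\cap\partial B)_\varepsilon$, yielding $(A\cap B)\setminus (\partial A\cap\partial B)_\varepsilon \subset \Omega$. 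Smoothness of $\partial\Omega$ follows from $\nabla(m_\delta(f_A,f_B)) = m_s\nabla f_A + m_t\nabla f_B$ with $m_s, m_t \geq 0$ and $m_s+m_t = 1$, which is nonzero at transverse intersection points; non-transverse configurations (where $\nabla d_A, \nabla d_B$ become antiparallel) can be ruled out on $\partial\Omega$ by an independent local argument using that $A\cap B$ is locally nonempty near every relevant corner.

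For the curvature condition one computes
\[
-D^2\bigl(m_\delta(f_A, f_B)\bigr) = \tfrac{\rho''(f_A - f_B)}{2}(\nabla f_A - \nabla f_B)^{\otimes 2} + m_s(-D^2 f_A) + m_t(-D^2 f_B).
\]
The first summand is positive semi-definite since $\rho$ is convex. Using signed distance, $-D^2 d_A$ has $\nabla d_A$ in its kernel and its remaining eigenvalues equal the principal curvatures $\kappa_i^A/(1 - d_A\kappa_i^A)$ of the inward offset $\{d_A = d_A(x)\}$, which by the Weingarten formula pointwise dominate the original principal curvatures $\kappa_i^A$ of $\partial A$ whenever $d_A(x)>0$; analogously for $\partial B$. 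The hypothesis $\Gamma\supset\Gamma_+$ makes $\Gamma$ monotone: if $v\in\Gamma$ and $w\geq v$ componentwise, then $w-v\in\Gamma_+\subset\Gamma$, so $w = v + (w-v)\in\Gamma$ since a convex cone is closed under addition. Hence the offset curvatures of $\partial A$ and $\partial B$ still lie in $\Gamma$.

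The main obstacle is the final step: deducing from the formula above that the eigenvalues of the second fundamental form of $\partial\Omega$, namely the restriction of $-D^2(m_\delta(f_A,f_B))/|\nabla(m_\delta(f_A,f_B))|$ to $T\partial\Omega$, form a tuple in $\Gamma$. This combines three ingredients, each invoking the monotonicity of $\Gamma$: (i) convexity of the matrix cone $\{N \textrm{ symmetric} : \lambda(N)\in\Gamma\}$, which handles the convex combination $m_s(-D^2 d_A) + m_t(-D^2 d_B)$; (ii) Cauchy eigenvalue interlacing for the restriction to the codimension-one subspace $T\partial\Omega$, combined with monotonicity to pass from the unrestricted tuple to the $(n-1)$-tuple of restricted eigenvalues; and (iii) monotonicity under the rank-one positive semi-definite correction $\tfrac{\rho''}{2}(\nabla f_A-\nabla f_B)^{\otimes 2}$. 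Throughout, the assumption $\Gamma\supset\Gamma_+$ is the engine: each step of the smoothing produces a nonnegative correction, and monotonicity absorbs these corrections without leaving $\Gamma$.
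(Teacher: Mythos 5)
Your construction, inclusions, and the decomposition of $-D^2$ of the smoothed minimum into a PSD rank-one term plus a convex combination all match the paper's scheme, and your use of convexity of the matrix cone and of monotonicity from $\Gamma_+\subset\Gamma$ is the same as the paper's Lemma \ref{lem convex}. The genuine gap is your ingredient (ii), and it is exactly the point where the paper does something you do not. To apply convexity you must first know that the restrictions $(-D^2 f_A)|_{T\partial\O}$ and $(-D^2 f_B)|_{T\partial\O}$ individually have eigenvalue tuples in $\Gamma$. For the signed distance function the full spectrum of $-D^2 d_A$ is $\big(0,\ \kappa_1/(1-d_A\kappa_1),\ldots,\kappa_{n-1}/(1-d_A\kappa_{n-1})\big)$, with the $0$ in the normal direction, and Cauchy interlacing only bounds the restricted eigenvalues from below by the $n-1$ \emph{smallest} entries of this $n$-tuple. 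When $\Gamma$ allows negative principal curvatures (e.g.\ the mean-convex cone, which is the case needed in Section \ref{sec existence}), the $0$ need not be the largest entry, so the interlacing lower bound replaces the largest curvature by $0$ and is in general \emph{not} in $\Gamma$; monotonicity then gives nothing. Concretely, for $n=3$ and a mean-convex $\partial A$ with principal curvatures $(-1,2)$ at a point, restricting $-D^2 d_A$ to a plane spanned by the $\kappa=-1$ direction and a direction close to the normal gives eigenvalues close to $(-1,0)$, whose sum is negative. Such tilted tangent planes really occur on $\partial\O$ near $\partial A\cap\partial B$, since there the normal of $\partial\O$ is the convex combination $m_s\nabla f_A+m_t\nabla f_B$. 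The paper's fix is to replace $d_A$ by $a=g\circ d_A$ with $1\le g'\le 2$ and $g''\le -C$ near $0$: then the normal eigenvalue of $-D^2 a$ is at least $C$, hence the largest, so interlacing bounds the restricted tuple from below by the (slightly improved) tangential curvatures, which lie in $\Gamma$, and monotonicity applies. Without this modification of the distance functions your final step fails for non-convex cones.

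A second, smaller gap is the smoothness of $\partial\O$. Your claim that antiparallel-gradient configurations on $\partial\O$ ``can be ruled out by an independent local argument'' is not substantiated and is false for particular values of $\delta$: at a thin neck of $A\cap B$ where $\nabla d_A=-\nabla d_B$ and $d_A=d_B$, the gradient of $m_\delta(f_A,f_B)$ vanishes identically, and for the specific $\delta$ with $d_A=\rho(0)/2$ this critical point lies on the zero level set and can be a saddle joining two components of $\{m_\delta>0\}$, i.e.\ a genuine singular point of $\partial\O$. What is true is that this happens only for exceptional parameters: the paper verifies $D_{(\delta,x)}\Phi\neq 0$ on $\{\Phi=0\}$ and applies Sard's theorem to the projection $(\delta,x)\mapsto\delta$ to select a $\delta$ for which $\partial_x\Phi(\delta,\cdot)\neq 0$ on $\{\Phi(\delta,\cdot)=0\}$. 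Your setup carries the same parameter $\delta$, so this fix is available to you, but as written the step is missing.
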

\begin{example}
Choosing $\Gamma= \{ (\kappa_1,\ldots,\kappa_{n-1})\colon \kappa_i\geq 0 \text{ for all }i\}$ corresponds to convex subsets.
Choosing $\Gamma= \{ (\kappa_1,\ldots,\kappa_{n-1})\colon \sum \kappa_i\geq 0\}$ corresponds to mean-convex domains. This is what we use in Section \ref{sec existence}.
\end{example}

The problem of mollifying inside convex curvature cones was an open problem from the problem section of the conference {\it Geometric evolution equations} which took place in Konstanz in 2011. 
The proof uses distance functions to the boundaries and a mollified version of the minimum of these.
It can be read independently of the other sections and is applied most noteably in Section \ref{sec existence} in the approximation process.
The author expects that this result is of great use and will be widely applicable.

This article emerged from the author's master thesis and is meant to gather the main results. The author wishes to thank Oliver Schn\"urer for supervising the thesis and for his great support. The author also likes to thank Ben Lambert for helpful advice.

\section{Existence} \label{sec existence}

We prove the existence result Theorem \ref{thm existence} by approximating by auxiliary problems and using a priori estimates.

\begin{proof}[Proof of Theorem \ref{thm existence}]
We cut off the initial function $u_0$ in height by considering
\[ \ol{u}_{0,R}:= \widetilde{\max}\left(\widetilde{\min}(u_0,R),-R\right)\]
for $R>0$, where $\widetilde{\max}$ and $\widetilde{\min}$ are mollified versions of $\max$ and $\min$ respectively (defined analogously to \eqref{eq def of mollified min} setting $\delta=1/2$ there).
Next, we cut off the domain of definition $\O$ by intersecting with a ball $B_{2R}\equiv B_{2R}(0)$ and using Theorem \ref{thm smoothing}. This gives smooth open sets $\O\cap B_R \subset \O_R \subset \O\cap B_{2R}$ whose boundaries have nonnegative mean curvature $H[\p \O_R]\geq 0$.
Finally, we restrict the functions $\ol{u}_{0,R}$ to $\O_R$ and take a mollification to find smooth functions $u_{0,R}$ defined on $\O_R$ satisfying $\|u_{0,R}-\ol{u}_{0,R}\|_{L^\infty(\O_R)}<R^{-1}$ and $\|u_{0,R}-\ol{u}_{0,R}\|_{C^2(\p \O\cap B_R)}< R^{-1}$.

Now define $u_R$ as the solution of the auxiliary problem
\[
 \begin{cases}
  \dot{u}_R= \sqrt{1+|Du_R|^2}\, \divergence\left(\frac{Du_R}{\sqrt{1+|Du_R|^2}}\right)
  & \text{in } \O_R\times (0,\infty),\\
  u_R(x,t)= u_{0,R}(x)
  & \text{for } (x,t)\in \P(\O_R\times (0,\infty)).
 \end{cases}
\]
By \cite[Theorem 2.1]{Hui}, $u_R$ is well-defined and satisfies
\begin{align*}
  & u_R\in C^{0}(\ol{\O_R}\times [0,\infty))\cap C^\infty(\O_R \times (0,\infty))\\
 \text{and} \hspace{1cm} & Du_R\in C^0(\ol{\O_R}\times [0,\infty)).
\end{align*}
(By de Giorgi-Nash-Moser-estimates the spatial derivative $Du_R$ is even H\"older-continuous for some exponent.)

To be able to utilize the Arzel\`{a}-Ascoli-Theorem and to pass to a limit  and obtain a solution of the initial problem, we are going to need local a priori estimates.
These estimates will be local in space, time, and height: Due to the unboundedness in height we can only expect estimates at points $(x,t)$ depending on $|u_R(x,t)|$.

Since by Lemma \ref{lem gradient bounds} and Lemma \ref{lem sup bound} we have local gradient bounds at the boundary, local gradient estimates easily follow from the results of Section 2 in \cite{E+H}.
Using spheres as barriers we obtain H\"older-estimates in time with exponent $1/2$ (cf. Section 6 of \cite{MCF without Singularities}). This is sufficient to apply an Arzel\`{a}-Ascoli argument.

To use Arzel\`{a}-Ascoli for unbounded functions, simply compose with a homeomorphism $\Phi\colon [-\infty,\infty]\to [-1,1]$ which is smooth on $(-\infty,\infty)$. Then the gradient and H\"older-in-time estimates give locally uniform estimates for $\Phi\circ u_R$ where $\Phi\circ u_R\in (-1+\eps,1-\eps)$, which is sufficient: Locally in space-time there is for any $\eps>0$ a $\delta>0$ such that for almost all $R\in \N$ we have
\[
  |(x,t)-(y,s)|<\delta \Rightarrow |\Phi\circ u_R(x,t)-\Phi\circ u_R(y,s)|<\eps.
\]
By Arzel\`{a}-Ascoli a subsequence of $(\Phi \circ u_R)_{R\in \N}$ converges locally uniformly to a continuous function on $\ol{\O}\times [0,\infty)$ as $R\to \infty$. This correspondes to pointwise convergence of a subsequence of $(u_R)$ to a continuous function $u\colon \ol{\O}\times [0,\infty) \to [-\infty,\infty]$ and locally uniform convergence on $\ol{\O}\times [0,\infty) \cap \{|u|<\infty\}$.

Then, using the interior estimates for higher derivatives in \cite{E+H}, one has locally smooth convergence on $\O\times (0,\infty) \cap \{|u|<\infty\}$ and we see that $u$ solves the same differential equation there as the $u_R$, which completes the proof.
\end{proof}

Now we are going to establish the a priori estimates. Most importantly we need
\begin{lemma}[local gradient estimates at the boundary] \label{lem gradient bounds}
 Let $\O$ be as in the statement of Theorem \ref{thm existence}, $x_0\in \p \O$, $r,T>0$, $B_r := B_r(x_0)\cap \O$ and $\Gamma_r:= \p \O\cap B_r(x_0)$.
 Let $u\in C^{2;1}(B_{2r}\times (0,T))\cap C^0(\ol{B_{2r}}\times [0,T])$ be a solution of
 \[
  \begin{cases}
    \dot{u}= \sqrt{1+|Du|^2}\,\divergence\left( \frac{Du}{\sqrt{1+|Du|^2}} \right) & \mbox{in } B_{2r}\times (0,T),\\
    u(x,0)= u_0(x)& \mbox{for } x\in B_{2r},\\
    u(x,t)= \phi(x) & \mbox{for } x\in \Gamma_{2r},\, t>0.
  \end{cases}
 \]
 For the initial- and boundary data assume $u_0\in \operatorname{Lip}(\ol{B_{2r}})$ and $\phi\in C^2(\ol{B_{2r}})$. Assume $Du\in C^0(\ol{B_{2r}}\times [0,T])$.
 Then on $\Gamma_r\times [0,T]$ we have
 \[
    |Du|\leq C\left(n,\|u\|_{L^\infty(B_{2r}\times (0,T))}, \|u_0\|_{\operatorname{Lip}(B_{2r})}, \|\phi\|_{C^2(\ol{B_{2r}})},
    r, \Gamma_{3r}\right).
 \]
\end{lemma}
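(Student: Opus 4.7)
My plan is a classical barrier argument for boundary gradient estimates. Since $u \equiv \phi$ on $\Gamma_{2r}$, at any $y \in \Gamma_r$ the tangential components of $Du(y,t)$ equal those of $D\phi(y)$, so it suffices to bound the inward normal derivative $\partial_\nu u$ from both sides. To this end I would construct an upper barrier
\[
  w^+(x) := \phi(x) + g(d(x))
\]
on a tubular region $U := \{x \in B_{3r/2}(x_0) \cap \Omega : d(x) < d_0\}$, where $d(x) := \dist(x,\partial\Omega)$ is smooth for $d_0$ small depending on the geometry of $\Gamma_{3r}$, and $g\colon [0,d_0]\to [0,\infty)$ is smooth, increasing, strictly concave with $g(0)=0$. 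A symmetric lower barrier $w^- := \phi - g(d)$ handles the matching lower bound.

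The central step, and the main technical obstacle, is choosing $g$ so that $w^+$ is a stationary supersolution, $F(w^+) := a^{ij}(Dw^+) D_{ij} w^+ \leq 0$ with $a^{ij}(p) := \delta^{ij} - p^ip^j/(1+|p|^2)$. Expanding $Dw^+ = D\phi + g'(d) Dd$ and using the standard diagonalization of $D^2 d$ (eigenvalue $0$ along $Dd$ and $-\kappa_i/(1-d\kappa_i)$ along the principal tangent directions of $\partial\Omega$ at the nearest point), a computation gives
\[
  F(w^+) = a^{ij}(Dw^+)D_{ij}\phi + g''(d)\,a^{ij}(Dw^+)D_i d\,D_j d - g'(d)\sum_{i=1}^{n-1}\frac{\kappa_i}{1-d\kappa_i} + g'(d)\cdot \mathcal{O}\!\left(\frac{1}{1+g'(d)^2}\right),
\]
where $a^{ij}(Dw^+)D_i d\, D_j d$ is positive of size $\mathcal{O}(1/(1+g'(d)^2))$. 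Mean convexity of $\partial\Omega$ combined with $d_0$ small gives $\sum_i \kappa_i/(1-d\kappa_i) \geq 0$, so that term is a nonpositive contribution --- this is where $H[\partial\Omega]\geq 0$ is essentially used. Choosing $g(s) := \mu^{-1}\log(1+\nu s)$, so $g''/(g')^2 = -\mu$, with $\mu$ first taken large so that this dominates $\|\phi\|_{C^2}$ and the error terms, and then $\nu$ large enough that $g(d_0) \geq 2(\|u\|_\infty + \|\phi\|_\infty)$ and $g'(0) = \nu/\mu \geq \|u_0\|_{\operatorname{Lip}} + \|D\phi\|_\infty$, one obtains $F(w^+)\leq 0$. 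Balancing these two regimes for $\mu$ and $\nu$ is the delicate point.

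Finally I would verify $w^+\geq u$ on the parabolic boundary of $U\times(0,T)$: on $\Gamma_r$, $w^+ = \phi = u$; on the inner shell $\{d = d_0\}$, $w^+\geq -\|\phi\|_\infty + g(d_0) \geq \|u\|_\infty \geq u$; at $t = 0$, using $u_0 = \phi$ on $\Gamma_{2r}$ and the Lipschitz bound, $u_0(x) - \phi(x) \leq (\|u_0\|_{\operatorname{Lip}} + \|D\phi\|_\infty)\,d(x) \leq g(d(x))$ by concavity of $g$ and the choice of $g'(0)$; the lateral piece $\partial B_{3r/2}(x_0)\cap \Omega \cap \{d<d_0\}$ is handled by a standard convex radial cutoff compatible with the supersolution property or by slightly shrinking the ball. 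The parabolic comparison principle then yields $u\leq w^+$ on $\overline{U}\times [0,T]$, so $\partial_\nu u(y,t) \leq \partial_\nu \phi(y) + g'(0)$ for $y\in \Gamma_r$. The symmetric lower barrier gives the matching lower bound, and combined with the tangential bound this completes the estimate on $|Du|$ with constant of the claimed form.
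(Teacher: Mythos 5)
Your strategy is the same as the paper's: the paper invokes the classical Giusti barriers $w^{\pm}=\phi\pm\delta\log(1+\sigma d)$ near $\p\O$ and localizes them, and your $g(d)=\mu^{-1}\log(1+\nu d)$ is exactly that barrier. Two steps fail as written, though. First, the quantifier ordering for the collar width: you fix $d_0$ by the geometry of $\Gamma_{3r}$ alone and then take $\mu$ (and $\nu$) large. But the only term guaranteed to absorb $a^{ij}(Dw^+)D_{ij}\phi=\mathcal{O}(\|\phi\|_{C^2})$ and the $\mathcal{O}(1)$ error terms is $g''\,a^{ij}D_id\,D_jd$, whose magnitude is of order $\mu\,(g')^2/(1+(g')^2)$ (the mean-convexity term has the right sign but can vanish, e.g.\ for a minimal boundary, so it cannot be relied on). Since $g'(d)\leq 1/(\mu d)$, near the inner shell $\{d=d_0\}$ this absorbing term is of order $1/(\mu d_0^2)$, which tends to $0$ as $\mu\to\infty$ with $d_0$ fixed; so $F(w^+)\leq 0$ genuinely fails on part of the collar. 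The repair is to choose the collar width after, and small in terms of, the barrier parameters --- this is precisely the paper's choice $\eps=\sigma^{-1/2}$ --- and the height requirement $g(d_0)\geq 2(\|u\|_\infty+\|\phi\|_\infty)$ can still be met afterwards by enlarging $\nu$. Second, at $t=0$ you claim $L\,d\leq g(d)$ ``by concavity of $g$ and the choice of $g'(0)$''; concavity gives the opposite inequality $g(d)\leq g'(0)\,d$, so $g'(0)\geq L$ proves nothing here. The easy correct argument is the secant bound: $g-Ld$ is concave, vanishes at $0$, and is nonnegative at $d_0$ once $g(d_0)\geq L d_0$ (true for $\nu$ large), hence nonnegative on $[0,d_0]$.

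Also, the lateral boundary $\p B_{3r/2}(x_0)\cap\O\cap\{d<d_0\}$ cannot be dismissed by ``slightly shrinking the ball'': at lateral points where $d$ is small you have $w^+\approx\phi$, while no quantitative control of $u-\phi$ in the normal direction is available there --- that control is exactly what the lemma is proving. The paper's localization is the concrete fix you gesture at: add $\pm(\|u\|_{L^\infty}+\|\phi\|_{L^\infty})\,\eta$ to the barriers, with $\eta$ a cutoff vanishing on $B_r$ and $\equiv 1$ near the lateral part; this enforces the ordering on the lateral boundary and contributes only bounded second-order terms, absorbed by enlarging $\mu$, at the price of the $r$-dependence of the constant stated in the lemma. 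With these repairs your argument coincides with the paper's proof.
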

\begin{proof}
 A nonlocal version of this result can be found, for example, in \cite[Chapter 1.4]{Giusti}. There, one uses the barriers
 \[
  w^{\pm}:= \phi \pm \delta \log(1+\sigma d),
 \]
where $d$ is the signed distance function to $\p \O$ and the constants are chosen like $\delta \ll 1 \ll \sigma(\delta)$. This barrier works on an $\eps$-neighbourhood of $\p \O$, where $\eps$ is chosen as $\sigma^{-1/2}$ and so small, that we are in a tubular neighbourhood.
Because of $w^{\pm}=\phi$ on the boundary we find the desired gradient bound on the boundary by the comparison principle for Dirichlet boundary conditions.

To obtain a local version simply add $\pm (\|u\|_{L^\infty}+\|\phi\|_{L^\infty})\cdot \eta$ to the barriers, where $\eta\in C^\infty (\ol{B_{2r}})$ satisfies $0\leq \eta \leq 1$, $\eta\equiv 0$ on $B_r$ and $\eta\equiv 1$ on $B_{2r}\setminus B_{3r/2}$. The proof then works as in the nonlocal case but the choice of $\delta$ and $\sigma$ depends additionally on $r$ because the bounds on derivatives of $\eta$ depend on $r$.
\end{proof}
To apply Lemma \ref{lem gradient bounds} we need local $L^\infty$-estimates on $u$. For this purpose we have the following
\begin{lemma}[$L^\infty$-estimates near the boundary]\label{lem sup bound}
 Let $\O$ be as before, $R,T>0$ and $x_0$, $B_R$ and $\Gamma_R$ defined as in Lemma \ref{lem gradient bounds}.
 Let $u\in C^{2;1}(B_R\times (0,T))\cap C^0(\ol{B_R}\times [0,T])$ be a solution of \eqref{eq GMCF} on $B_R\times (0,T)$.
 Then there exists $r>0$ dependent on $B_R,\Gamma_R$ and $T$, such that
 \[
  \sup_{B_r\times (0,T)} |u| \leq C,
 \]
 where $C>0$ depends on the same quantities as $r$ and additionally depends on $\sup_{(B_R\times \{0\})\cup (\Gamma_R \times (0,T))}|u|$.
\end{lemma}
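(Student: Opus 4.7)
The plan is to bound $|u|$ on $B_r(x_0)\cap\O\times[0,T]$ by sandwiching the evolving graph of $u$ in $\R^{n+1}$ between two shrinking-sphere barriers, via the avoidance principle for mean curvature flow. The spheres will be positioned so that the part of $u$'s parabolic boundary over $\partial B_R(x_0)\cap\O$, on which the hypothesis provides no a priori control, is automatically disjoint from each barrier.

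For the set-up, I pick $y_0\in\R^n$ and $\rho_0>0$ with $s:=|y_0-x_0|$ satisfying $s+\rho_0<R$ and $\rho_0^2>s^2+2nT$; then $\rho(t):=\sqrt{\rho_0^2-2nt}$ satisfies $B_{\rho(t)}(y_0)\subset B_{\rho_0}(y_0)\subset B_R(x_0)$ for all $t\in[0,T]$, and $\rho(T)>s$. Set $r:=\rho(T)-s>0$, so that $B_r(x_0)\subset B_{\rho(t)}(y_0)$ on $[0,T]$. Writing $M:=\sup_{(B_R\times\{0\})\cup(\Gamma_R\times(0,T))}|u|$ for the data bound, choose $h:=M+\rho_0+1$ and consider
\[
  S^\pm_t:=\partial B_{\rho(t)}\!\bigl((y_0,\pm h)\bigr)\subset\R^{n+1},
\]
each a classical self-similar shrinking mean curvature flow solution.

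At $t=0$, $u_0(x)\leq M<h-\rho_0\leq h-\sqrt{\rho_0^2-|x-y_0|^2}$ on $B_{\rho_0}(y_0)\cap\O$, so $\graph u_0$ lies strictly below $S^+_0$. The boundary of $\graph u(\cdot,t)$ splits into the part over $\Gamma_R$, where $|u(x,t)-h|\geq h-M>\rho_0\geq\rho(t)$ keeps $(x,u(x,t))$ outside the closed ball $\overline{B_{\rho(t)}}((y_0,h))$ for every $t$, and the part over $\partial B_R(x_0)\cap\O$, where $|x-y_0|\geq R-s>\rho_0\geq\rho(t)$ keeps the point outside the spatial projection of the sphere regardless of $u(x,t)$ --- this second observation removes the need for any a priori bound on $u$ along $\partial B_R\cap\O$. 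By the avoidance principle for mean curvature flow, applied to $S^+_t$ and the smoothly evolving $\graph u(\cdot,t)$ with its boundary controlled as above, these stay disjoint on $[0,T]$. At each fixed $x\in B_{\rho(t)}(y_0)\cap\O$, continuity of $u(x,\cdot)$ prevents crossing of either sphere-height $h\pm\sqrt{\rho(t)^2-|x-y_0|^2}$, so $u$ remains strictly below the lower hemisphere: $u(x,t)<h-\sqrt{\rho(t)^2-|x-y_0|^2}\leq h$. The mirror argument with $S^-_t$ gives $u(x,t)>-h$, and together these yield $|u|\leq h$ on $B_r(x_0)\cap\O\times[0,T]$, with $h$ explicit in terms of the data.

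The main obstacle is the avoidance-principle argument with the graph's boundary. The part over $\Gamma_R$ is easily absorbed by taking $h$ large in terms of $M$; the part over $\partial B_R(x_0)\cap\O$, where $u$ is a priori uncontrolled, is the delicate case and is handled geometrically by the condition $s+\rho_0<R$. Combined with $\rho_0^2>s^2+2nT$ this forces $T<R^2/(2n)$, giving $r$ positive in that regime; the general case follows by iterating the argument on consecutive short time intervals, producing the announced dependence of $r$ on $T$.
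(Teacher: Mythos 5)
Your short-time argument is essentially fine: the two shrinking spheres placed high above and far below the data, with projection compactly inside $B_R(x_0)$, do avoid the graph's uncontrolled lateral boundary over $\partial B_R(x_0)\cap\Omega$ for the geometric reason you give, and the avoidance principle plus the continuity-in-time argument then yields $|u|\leq h$ on $B_r(x_0)\cap\Omega\times[0,T]$ as long as the sphere survives, i.e.\ for $T<\rho_0^2/(2n)$ with $\rho_0<R-s$. The genuine gap is the final sentence: the claimed iteration to reach arbitrary $T$ does not work. After the first step you only control $u(\cdot,\tau_1)$ on the smaller half-ball of radius $r_1=\sqrt{\rho_0^2-2n\tau_1}$; the next sphere must therefore have its projection inside $B_{r_1}(x_0)$ (outside it the graph at time $\tau_1$ is uncontrolled and could pierce the sphere), so it survives only for a further time $\tau_2<r_1^2/(2n)=\rho_0^2/(2n)-\tau_1$. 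Inductively $\sum_k\tau_k<\rho_0^2/(2n)<R^2/(2n)$ for every number of steps: the controlled region shrinks at exactly the rate that consumes the remaining time budget, so no iteration scheme of this type ever passes $T=R^2/(2n)$, whereas the lemma is asserted for every $T>0$ (with $r$ and $C$ allowed to depend on $T$). A related warning sign is that your proof never uses the mean convexity $H[\partial\Omega]\geq 0$, which is precisely the hypothesis that makes the statement work for large times.

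The paper's proof supplies the missing long-time ingredient by building a barrier adapted to the boundary rather than a compact one: writing $\Gamma_R$ locally as $\graph h$ over an $(n-1)$-ball $B$, mean convexity makes $h$ a stationary subsolution of graphical MCF, so the solution $v$ with initial data $h+\eta$ and boundary data $h$ satisfies $v>h$ on $B$ for all positive times by the strong maximum principle. This produces a ``tent'' $Q$ between $\graph h$ and $\graph v(\cdot,t)$ that persists (with a $T$-dependent thickness) for every finite $T$, and the function $w=(v-x^n)^{-1}$, whose level sets move by mean curvature, is a supersolution up to an additive constant $c$; the barriers $\pm(w+ct+\sup|u|)$ blow up on the uncontrolled parts of $\partial Q$, so only the initial data and the data on $\Gamma_R$ enter. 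To complete your proof you would need a barrier with this kind of unlimited lifespan near $\Gamma_R$; shrinking spheres, and iterations thereof, cannot provide it.
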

\begin{proof}
 Without loss of generality we may assume $x_0=0$ and that the inner normal to $\p \O$ at $x_0$ is $e_n=(0,\ldots,0,1)$.
 We are going to write the boundary locally as a graph:
 There is an open ball $B$ in $\R^{n-1}$ with center at the origin and $s>0$, such that
 \[
  B_R\cap (B\times (-s,s))= \{(\hat{x},x^n)\in B\times (-s,s)\colon h(\hat{x})<x^n\}
 \]
 and $\Gamma_R\cap (B\times (-s,s)) = \graph h$ for some function $h\in C^\infty(\ol{B})$.
 Because of $H[\p \O]\geq 0$, $h$ satisfies the differential inequality
 \[
  \divergence \left( \frac{Dh}{\sqrt{1+|Dh|^2}} \right) \geq 0.
 \]

 Take $0\leq \eta \in C^\infty_0(B)$, $\eta\neq 0$, such that $v_0:= h+\eta$ fulfils $|v_0|<s$.
 Finally, define $v\in C^\infty(B\times (-1,T+1))\cap C^0(\ol{B}\times [-1,T+1])$ to be the solution of
\[
  \begin{cases}
    \dot{v}=\sqrt{1+|Dv|^2}\divergence \left(\frac{Dv}{\sqrt{1+|Dv|^2}}\right)
    & \mbox{in } B \times (-1,T+1),\\
    v(\hat{x},t)=v_0(\hat{x})& \mbox{for } (\hat{x},t)\in \P(B\times (-1,T+1)).
  \end{cases}
\]
This way we have $v\in C^\infty(\ol{B}\times [0,T])$. By the maximum principle we also have $|v|<s$, and by the strong maximum principle $h(\hat{x})<v(\hat{x},t)$ holds for all $x\in B$ and $t>-1$.

Define $Q$ by
\[
 Q:= \{(\hat{x},x^n,t)\in B\times (-s,s)\times (0,T)\colon h(\hat{x})<x^n<v(\hat{x},t)\},
\]
and choose $r>0$ from our assertion such that $B_r\times (0,T)\subset Q$ and $B_r\times (0,T)$ has positive distance to $\graph v$.

Now we construct a barrier on $Q$ with the aid of the function $v$. On $Q$ define
\[ w(x,t):= \left(v(\hat{x},t)-x^n\right)^{-1}\equiv \left(v(x,t)-x^n\right)^{-1}, \]
setting $v(x,t)\equiv v(\hat{x},t)$ for $(x,t)\in Q$.
One easily verifies that the level-sets of $w$ move by mean curvature, so that $w$ solves
\[
 \dot{w}-\left(\delta^{ij}-\frac{w^i w^j}{|Dw|^2}\right) w_{ij} = 0.
\]
Using this fact we calculate
\begin{align*}
	&\dot{w}-\sqrt{1+|Dw|^2}\,\divergence\left(\frac{Dw}{\sqrt{1+|Dw|^2}}\right)=  \dot{w}-\left(\delta^{ij}-\frac{w^i w^j}{1+|Dw|^2}\right)w_{ij}\\	
	 &\hspace{2em} = -\frac{w^i w^j}{|Dw|^2\left(1+|Dw|^2\right)}w_{ij}
		\\
	 & \hspace{2em} = \frac{1}{1+|Dw|^2}
		\left(\frac{v_{ij}w^i w^j}{(v-x^n)^2|Dw|^2}
			-2|Dw|^2(v-x^n)\right)
		\\
	&\hspace{2em} \geq
		-\frac{\|D^2v\|_{L^\infty(Q)}}{|Dw|^2(v-x^n)^2}-2(v-x^n)
	\geq
		-4s^2\|D^2v\|_{L^\infty(Q)}-4s
		\\
	& \hspace{2em} \geq -c
\end{align*}
where $c>0$ is a constant that ultimately only depends on $B_R, \Gamma_R$ and $T$ through the above construction.
Finally,
\[
 w^{\pm} := \pm \left(w+ct+\sup_{(B_R\times \{0\})\cup(\Gamma_R\times (0,T))} |u|\right)
\]
are upper and lower barriers for $u$ on $Q$ respectively. Observe
\[w(y,t) \to \infty \mbox{ for } y\to x\in \P (Q)\setminus \left[ \big(B_R \times \{0\})\cup (\Gamma_R \times(0,T)\big)\right].\]
Now, on $B_r\times [0,T]$, $w^{\pm}$ and therefore $|u|$ are bounded by a constant as  in the assertion.
\end{proof}

\begin{remark}
 It is worth pointing out the solution is not unique in general:
 Consider two so called \emph{grim reaper curves} lying next to each other, i.\,e.\ the graph of the function
 \[
  u_0(x):= -\log |\sin x| \hspace{0.6cm} \text{for } 0\neq x\in (-\pi,\pi).
 \]
 We can write down a translating solution to \eqref{eq GMCF} with initial data $u_0$:
 \[
  \hat{u}(x,t):= t-\log |\sin x| \hspace{0.6cm} \text{for } 0\neq x\in (-\pi,\pi),\; t\in \R.
 \]
 But the solution $u$ we have constructed in the proof of Theorem \ref{thm existence} differs from this translating solution $\hat{u}$: The two grim reapers get connected at infinity.
 This is because we cut off the function $u_0$ at some height $R$ in the approximation process. To see that the approximating solutions $u_R$ do not converge to $\hat{u}$ we may consider the integral of the difference:
 \[
 \frac{d}{dt} \int\limits_{-\pi}^{\pi} \hat{u}(\cdot,t)-u_R(\cdot,t)
 = \int\limits_{\graph \hat{u}(\cdot,t)} \hat{K} \hspace{0.3cm}- \int\limits_{\graph u_R(\cdot,t)} K_R \,
 \geq 2\pi - \pi = \pi,
 \]
 where $\hat{K}, K_R$ are the respective curvatures. By the maximum principle the convergence $u_R \to u$ is monotone and thus $\int u_R(\cdot,t)\to \int u(\cdot,t)$ as $R \uparrow \infty$.
 We conclude $\int (\hat{u}-u)(\cdot,t) \geq \pi t$ and therefore $u\neq \hat{u}$.
\end{remark}

\section{The Shadow-flow} \label{sec shadow-flow}
In this section we are going to investigate the projections/shadows of graphical mean curvature flow, that is the sets $\{|u|<\infty\}$ where $u$ is as in Theorem \ref{thm existence}.
We show that this shadow is a weak solution of mean curvature flow, where we use the following notion of weak solution which is based on the avoidance principle.

For this section we do not need to assume constant Dirichlet boundary values.

Let $\O$ be as in the last section.
\begin{definition}[Weak solutions]
 A family $(A_t)_{t\in [0,\infty)}$ of open subsets of $\ol{\O}$ is called
 \begin{itemize}
  \item a supersolution to mean curvature flow if the following holds:
    For any family $(B_t)_{t\in [a,b]}$ of open sets, such that $B_t\Subset \O$ with $\p B_t\in C^\infty$, and such that $(\p B_t)_{t\in [a,b]}$ is a classical solution to mean curvature flow, we have
    \[
     \ol{B_a}\subset A_a \; \Rightarrow \; B_b\subset A_b.
    \]
  \item a subsolution to mean curvature flow, if $(\ol{\O}\setminus \ol{A_t})_{t\in [0,\infty)}$ is a supersolution.
  \item a weak solution of mean curvature flow, if $(A_t)_{t\in [0,\infty)}$ is both a super- and subsolution.
 \end{itemize}
 We shall call $(\p \O \cap A_t)_{t\in [0,\infty)}$ the boundary values of $(A_t)_{t\in [0,\infty)}$.
\end{definition}
\begin{remark}
 %\begin{itemize} \neueZeile
  %\item
  In the above setting in the definition of supersolution we even have $\ol{B_t}\subset A_t$ for all $t\in [a,b]$, when $(A_t)_t$ is a supersolution. To see that the closure is contained one can use, for instance, the translation invariance of classical flows.
  %\item

  If $(A_t)$ is a family of open subsets of $\ol{\O}$ such that $(\p A_t)$ is a classical solution to mean curvature flow, then, by the avoidance principle, $(A_t)$ is a weak solution.
  %\item 

  Let $x\in \p A_t \cap \O$ and suppose that $\p A_t$ is smooth in a ball $B_r(x)\subset \R^n$. If $(A_t)$ is a weak solution and $\p A_t$ is smooth in a spacetime-neighbourhood of $(x,t)$, then $(\p A_t)$ solves mean curvature flow at $(x,t)$ classically.
  \begin{proof}
   We may assume, that $\p A_t \cap B_r(x)$ is the graph of a smooth function. Using the result of the next section we find two smooth open sets, one lying inside $A_t \cap B_r$, the other inside $B_r(x)\setminus A_t$, and such that their boundaries coincide in a neighbourhood of $x$ with $\p A_t$. Taking $r$ to be small enough, small translations of these two open sets away from $\p A_t$ serve as barriers. This way it can be seen, that the normal velocity of $\p A_t$ at $(x,t)$ coincides with the mean curvature at that point.
  \end{proof}
  %\item

  Similar weak notions of mean curvature flow are the set-theoretic subsolutions of Ilmanen (\cite{Ilmanen}) or more generally the barriers of De Giorgi. Both of them were compared to the level-set flow (see \cite{Belletini Novaga} for a comparison of De Giorgi's barriers to  level-set flow). (See \cite{Topology Change} for a definition of set-theoretic subsolutions including boundary values.)
  %\item

  Note that our definition of weak solutions is not very useful where $\p A_t \subset \p \O$ (taking the boundary $\p A_t$ relative to $\ol{\O}$). This is because there is no space left for a classical solution, that could possibly push $\p A_t$ inwards into $\O$. To circumvent this, one could compare with classical solutions with boundary values that may not be written as the boundary of an open set and which can intersect $\p A_t$. But this would cause trouble in the methods we are going to use next. Another way would be to compare $\p A_t$ not only with classical solutions in $\O$ but with classical solutions that are boundaries of open subsets in $\R^n$ and which do not intersect the boundary values. This viewpoint has the disadvantage of not being intrinsically in $\ol{\O}$. The methods presented in the following also work if one adopts this definition for weak solutions.
% \end{itemize}
\end{remark}

\begin{proposition}
For any open set $A\subset \O$ there exists a weak solution of mean curvature flow $(A_t)_{t\in [0,\infty)}$ with $A_0= A$. Furthermore there is a smallest such weak solution $(A_t)_{t\in [0,\infty)}$: For any weak solution $(A'_t)_{t\in [0,\infty)}$ with $A\subset A'_0$ we have $A_t\subset A'_t$ for all $t\in [0,\infty)$.
\end{proposition}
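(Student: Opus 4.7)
The plan is to construct the required weak solution explicitly as the shadow of a graphical mean curvature flow and then establish its minimality via an avoidance argument.

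For existence I apply Theorem \ref{thm existence} to an initial datum $u_0\colon \ol{\O}\to [-\infty,\infty]$ that is continuous, finite exactly on $A$, blows up at $\p A\cap \O$, and satisfies the regularity assumptions of Theorem \ref{thm existence} on $\{|u_0|<\infty\}$. A smoothed version of $x\mapsto 1/\dist(x,\ol{\O}\setminus A)$ on $A$, extended by $+\infty$ elsewhere, works after small modifications along $\p\O$ to achieve the $C^2$-regularity hypothesis on $u_0|_{\p\O}\cap\{|u_0|<\infty\}$. With $u$ the resulting solution from Theorem \ref{thm existence}, set
\[ A_t := \{x\in \ol{\O}: |u(x,t)|<\infty\}. \]
Continuity of $u$ into $[-\infty,\infty]$ makes each $A_t$ relatively open in $\ol{\O}$, with $A_0=A$.

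To verify that $(A_t)$ is a weak solution I only establish the supersolution property; the subsolution case follows by applying the same argument to $-u$. Let $(B_s)_{s\in[a,b]}$ be a classical MCF with $\ol{B_a}\subset A_a$ and $B_s\Subset\O$. Compactness and continuity of $u$ yield $M<\infty$ with $|u|\leq M$ on a neighbourhood of $\ol{B_a}\times\{a\}$. The vertical cylinder $\p B_s\times \R \subset \R^{n+1}$ is itself a (non-compact) mean curvature flow, because the mean curvature of a cylinder equals that of its cross-section. I truncate it and smoothly cap it off at heights $\pm(M+1)$ to obtain a compact closed MCF $\Sigma_s$ which at $s=a$ encloses the piece of $\graph u(\cdot,a)$ above $\ol{B_a}$. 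The avoidance principle in $\R^{n+1}$, applied to $\Sigma_s$ against the compact approximating graphs $\graph u_R(\cdot,s)$ from the proof of Theorem \ref{thm existence}, keeps those graphs trapped inside $\Sigma_s$ above $B_s$ for $s\in[a,b]$. Passing $R\to\infty$ (using the locally uniform convergence established in that proof) and then letting the cap heights tend to $\infty$ forces $u$ to stay bounded on $\ol{B_s}$ throughout $[a,b]$, so $B_b\subset A_b$ as required.

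For the minimality assertion, let $(A'_t)$ be any weak solution with $A\subset A'_0$, and suppose for contradiction that $x_0\in A_{t_0}\setminus A'_{t_0}$ for some $t_0>0$. By the subsolution property of $(A'_t)$, small classical MCFs of open sets placed near $x_0$ in $\ol{\O}\setminus\ol{A'_s}$ remain disjoint from $A'_s$ for short times. At the approximation level $u_R$ on the bounded smooth domain $\O_R$, where the graphical flow is smooth up to the boundary, such short-time exterior classical barriers can be concatenated via the weak-solution property of $(A'_t)$ to produce a classical MCF family $(\ol{C_s})_{s\in [0,t_0]}$ with $\ol{C_0}\subset \ol{\O}\setminus A$ and $x_0\in C_{t_0}$. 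Applying the vertical-cylinder avoidance argument from the existence part to $(\p C_s)$ against $\graph u_R(\cdot,s)$ and then sending $R\to\infty$ shows that $u(x_0,t_0)=\pm\infty$, contradicting $x_0\in A_{t_0}$.

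The main obstacle is the minimality step: classical MCF evolves forward and becomes singular in finite time, so a single classical barrier from $(x_0,t_0)$ back to $(\ol{\O}\setminus A,\{0\})$ need not exist for the limit flow. The remedy is to run the barrier construction on the bounded smooth approximation $\O_R$, where short-time smooth MCF comparisons exist and can be patched together using the weak-solution hypothesis on $(A'_t)$; only at the end does one pass to $R\to\infty$ via the monotone convergence of $u_R\to u$. The capping and smoothing of the cylinder $\Sigma_s$ in the existence argument is also technical but is a standard perturbation using, e.g., the result of Theorem \ref{thm smoothing}.
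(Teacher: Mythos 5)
There is a genuine gap, and also a structural problem with the chosen route. The paper proves this proposition by a soft, purely set-theoretic Perron-type construction that needs no PDE theory at all: one takes the union of all classical flows $(B_t)$ with $\ol{B_0}\subset A$, iterates this operation to get an increasing sequence $A^{(k)}_t$, and sets $A_t=\bigcup_k A^{(k)}_t$; a union of subsolutions is a subsolution, the supersolution property follows from compactness ($\ol{B_a}\subset A_a$ forces $\ol{B_a}\subset A^{(k)}_a$ for some $k$, hence $B_b\subset A^{(k+1)}_b$), and minimality is immediate from the construction. Your proposal instead builds a candidate as the shadow of the graphical flow from Theorem \ref{thm existence} and then tries to verify the weak-solution property by an avoidance argument. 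The capped-cylinder step is not a valid comparison as written: after you cap $\p B_s\times\R$ at heights $\pm(M+1)$, the resulting compact surface does not evolve as a capped cylinder --- its caps move, it need not exist (or stay outside the graphs over $B_s$) up to time $b$, and letting the cap height tend to $\infty$ does not obviously recover the cylindrical flow. Making exactly this rigorous is the content of the paper's Lemma \ref{lem weak cylinder} (uniform short-time existence via Ecker--Huisken, interior estimates, uniqueness of the cylindrical limit, an open-closed argument), and that lemma itself invokes the \emph{smallest} weak solution, i.e.\ the proposition you are proving; so this route is both much heavier than necessary and, within the paper's logical structure, circular unless you redo that lemma from scratch.

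The minimality step is the more serious gap. You cannot ``concatenate'' short-time exterior classical barriers, supplied by the subsolution property of $(A'_t)$, into a single classical mean curvature flow $(\p C_s)_{s\in[0,t_0]}$ with $\ol{C_0}\subset\ol{\O}\setminus A$ and $x_0\in C_{t_0}$: classical flows glued in time are not classical flows, they become singular in finite time, and no such flow connecting the complement of $A$ at time $0$ to $x_0$ at time $t_0$ need exist. Moreover, the statement you are implicitly trying to prove --- that the shadow flow of your particular $u_0$ is the smallest weak solution --- is stronger than what the proposition asserts, is nowhere claimed in the paper, and is doubtful in general (for instance in situations where the level-set flow fattens, the shadow need not agree with the minimal weak solution). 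So even granting the existence part, the ``furthermore'' clause of the proposition is not established by your argument, whereas in the paper it falls out of the construction for free.
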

\begin{proof}
Let $\mathcal{B}_0$ be the set of all families $(B_t)_{t\in [0,b]}$ of open sets such that $\ol{B_0}\subset A$ and $\p B_t\in C^\infty$ fulfils mean curvature flow. Define
\[
	A^{(0)}_t:= \bigcup \{B_t\colon (B_{t'})_{t'\in [0,b]}\in \mathcal{B}_0 \text{ with } t\in [0,b]\}.
\]
Then inductively define $\mathcal{B}_k$ and $A^{(k)}$ by setting $\mathcal{B}_k$ to be the set of all families $(B_t)_{t\in [a,b]}$ of open sets such that $\ol{B_a}\subset A^{(k-1)}_a$ and $\p B_t\in C^\infty$ fulfils mean curvature flow. Then set
\[
	A^{(k)}_t:= \bigcup \{B_t\colon (B_{t'})_{t'\in [a,b]}\in \mathcal{B}_k \text{ with } t\in [a,b]\}.
\]
Finally define the open sets $A_t:= \bigcup_{k\in \N} A^{(k)}_t$. Note that $A^{(k)}_t$ is a nondecreasing sequence of open sets. As a union of subsolutions $(A_t)$ is again a subsolution. To see that $(A_t)$ is a supersolution let $(B_t)_{t\in [a,b]}$ be a classical solution and let $\ol{B_a}\subset A_a$. Then by compactness $\ol{B_a}\subset A^{(k)}_a$ for some $k\in \N$. Therefore $\ol{B_b}\subset A^{(k+1)}_b\subset A_b$ and hence $(A_t)$ is a supersolution.

The second assertion is obvious from the construction.
\end{proof}

The following lemma concerning weak solutions will be useful. The result is trivial for classical solutions.
\begin{lemma} \label{lem weak cylinder}
Suppose $(A_t)_{t\in [0,\infty)}$ is a family of open subsets of $\ol{\O}$, such that $(A_t\times \R)_t$ is a weak solution of mean curvature flow in $\ol{\O}\times \R$.

Then $(A_t)_t$ is a weak solution of mean curvature flow in $\ol{\O}$.
\end{lemma}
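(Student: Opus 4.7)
The plan is to reduce, by complementation, to proving that $(A_t)_t$ is a \emph{supersolution}. The subsolution property for $(A_t)_t$ will then follow by applying the same argument to $(\overline{\Omega}\setminus\overline{A_t})_t$, whose product with $\R$ equals $(\overline{\Omega}\times\R)\setminus\overline{A_t\times\R}=(\overline{\Omega}\setminus\overline{A_t})\times\R$ and is therefore a supersolution in $\overline{\Omega}\times\R$ by hypothesis. So fix a classical MCF $(B_t)_{t\in[a,b]}$ with $B_t\Subset\Omega$, $\partial B_t\in C^\infty$, and $\overline{B_a}\subset A_a$; I must show $B_b\subset A_b$. Choose any $x_0\in B_b$. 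It is enough to produce a family $(\tilde B_t)_{t\in[a,b]}$ of open sets with $\tilde B_t\Subset\Omega\times\R$, $\partial\tilde B_t\in C^\infty$, solving MCF classically and satisfying $\overline{\tilde B_a}\subset A_a\times\R$ and $(x_0,0)\in\tilde B_b$: applying the supersolution property of $(A_t\times\R)$ then forces $(x_0,0)\in A_b\times\R$, hence $x_0\in A_b$.

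To construct $(\tilde B_t)$, I would truncate and smooth the cylindrical lift of $B_a$. Fix $L\gg 1$, pick a slightly smaller smooth subdomain $B_a'\Subset B_a$, some $L'<L$, and apply Theorem~\ref{thm smoothing} with the trivial cone $\Gamma=\R^{n-1}$ to $A:=B_a'\times\R$ and $B:=\R^n\times(-L',L')$. Only the intersection $A\cap B=B_a'\times(-L',L')$ need be bounded, as required, and the theorem produces a smooth open set $\tilde B_a$ with $\overline{\tilde B_a}\Subset B_a\times(-L,L)\Subset\Omega\times\R$. Let $(\tilde B_t)$ denote the classical MCF starting from $\tilde B_a$, existing up to some first singular time $\tau>a$. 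The avoidance principle between the bounded classical flow $\tilde B_t$ and the (classical, albeit unbounded) cylindrical flow $B_t\times\R$ gives $\overline{\tilde B_t}\subset B_t\times\R\Subset\Omega\times\R$ for $t<\tau$, while $\overline{B_a}\subset A_a$ yields $\overline{\tilde B_a}\subset\overline{B_a}\times[-L,L]\subset A_a\times\R$.

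The main obstacle is verifying that for $L$ sufficiently large we have $\tau>b$ together with $(x_0,0)\in\tilde B_b$. I would settle this by invoking the stability of classical MCF: sending $L\to\infty$ (with the smoothing scale tending to zero), the initial data $\tilde B_a$ converge locally smoothly to $B_a\times\R$, and standard local smooth dependence of classical MCF on initial data---a consequence of parabolic regularity applied on compact subsets of $\overline{\Omega}\times\R$---then promotes this to local smooth convergence $\tilde B_t\to B_t\times\R$ on $[a,b]$. Since the limit flow is regular on $[a,b]$ and contains $(x_0,0)$ at time $b$, for $L$ sufficiently large the same holds for $\tilde B_t$ near the slice $\{z=0\}$. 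The delicate content of this stability argument is a uniform-in-$L$ lower bound on $\tau$ on compact subsets, ruling out that singularities forming near the caps $\{z=\pm L\}$ propagate inward to $\{z=0\}$ within time $b-a$; heuristically this is clear once $L\gg\sqrt{2n(b-a)}$.
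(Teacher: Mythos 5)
Your opening reduction (prove the supersolution property and obtain the subsolution property by passing to complements, then test $(A_t\times\R)$ with bounded, smoothed truncations of the cylinder $B_a\times\R$ built from Theorem \ref{thm smoothing}) is exactly the paper's starting point, but the central step of your argument has a genuine gap. You need an admissible test flow on \emph{all} of $[a,b]$, i.e.\ a classical solution $(\tilde B_t)_{t\in[a,b]}$ with $\tilde B_t\Subset\O\times\R$, and you propose to get it by asserting that for $L$ large the truncated flow stays smooth up to time $b$ and converges locally smoothly to $B_t\times\R$ by ``parabolic regularity on compact subsets''. That is not a consequence of interior regularity, and the underlying claim is unjustified: a classical flow terminates at its \emph{first} singular time wherever the singularity occurs, and the geometry of the capped ends does not improve as $L\to\infty$, so singularities can occur near the caps at a time $\tau<b$ uniformly in $L$ (think of a nonconvex, e.g.\ dumbbell-shaped, $B_a$: $\p B_t$ stays smooth on $[a,b]$, while the capped cylinder can pinch near a cap at a time governed by the neck width, independently of $L$). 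Your heuristic that influence from the caps cannot travel further than $\sim\sqrt{2n(b-a)}$ is an avoidance/level-set statement about where the hypersurface can move; it says nothing about classical existence, and once $\tilde B_t$ is singular \emph{anywhere} you have no admissible test flow at later times, however regular things remain near $\{x^{n+1}=0\}$. This is precisely the difficulty the paper's proof is built to handle: it uses the truncations only on a short time interval $[a,a+\tau]$ with $\tau$ uniform in the truncation parameter (uniform curvature bounds for the smoothed truncations plus Proposition 4.1 of \cite{E+H}), identifies the limit with the cylindrical flow there via interior estimates \emph{and} a separate uniqueness-of-the-limit argument for graphs over the cylinder, and then reaches time $b$ by an open/closed argument in $t$ for the smallest weak solution starting from $B_a\times\R$. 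Some continuation mechanism of this kind (or a genuine, quantitative pseudolocality/uniform existence-time theorem, which you neither state nor prove) is missing from your proposal.

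Two further points. First, you fix $B_a'\Subset B_a$, so even if your convergence claim held, the limit of the truncated flows would be $B_t'\times\R$, the flow starting from the smaller domain, and an arbitrary $x_0\in B_b$ need not lie in $B_b'$; you must either let $B_a'$ exhaust $B_a$ (and then justify continuous dependence up to time $b$, again nontrivial) or simply take $B_a'=B_a$, which is permissible since $\ol{B_a}\subset A_a$ already gives $\ol{\tilde B_a}\subset\ol{B_a}\times[-L,L]\subset A_a\times\R$ and $\ol{B_a}\subset\O$ gives compact containment. Second, even on a fixed short time interval, ``local smooth convergence to the cylinder flow'' requires the Ecker--Huisken interior estimates together with an argument that the limit is the cylindrical solution (the paper devotes a paragraph to this uniqueness step); it does not follow from parabolic regularity alone.
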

\begin{proof}
We only show, that $(A_t)$ is a supersolution using the fact that $(A_t\times \R)$ is a supersolution.

Let $(B_t)_{t\in [a,b]}$ be a family of open sets such that $\p B_t\in C^\infty$ fulfils mean curvature flow, and $\ol{B_a}\subset A_a$. We need to show $B_b\subset A_b$.
In fact we prove $B_b\times \R \subset A_b\times \R$.
For this we approximate $B_a\times \R$ by bounded sets.

For $R>0$ take $\hat{K}^R$ to be a smoothed intersection of $B_a\times \R$ with $\{|x^{n+1}|<2R\}$ containing $B_a\times [-R,R]$ (use Theorem \ref{thm smoothing}). We may take the same closing ends for different $R>1$, to give curvature bounds on $\p \hat{K}^R$ independent of $R$. Then define $K^R:= \{x\in \hat{K}^R\colon \dist(x,\p \hat{K}^R)>R^{-1}\}$. Then we still have uniform curvature bounds for $R>R_0$ sufficiently large. Thus, by Proposition 4.1 of \cite{E+H} there are classical solutions $(M^R_t)_{t\in [a,a+\tau]}$ of mean curvature flow with $M^R_a=\p K^R$ for some $\tau >0$ independent of $R$. These solutions $(M^R_t)$ are written as graphs over the $\p \hat{K}^R$, which contain $\p B_a\times [-R,R]$. 

By interior estimates of \cite{E+H} and the uniqueness of the limit (see below) we find for $R\to \infty$ local convergence as graphs over $\p B_a\times \R$. This gives a solution of mean curvature flow which is written as a graph over $\p B_a\times \R$ and starts from there. Hence it coincides with $(\p B_t \times \R)_{t\in [a,a+\tau]}$.

(To see the uniqueness for smooth cylinders write a non-cylindrical solution as a graph over the initial cylinder. By the strong maximum principle the difference to the ordinary cylindrical solution attains no interior maximum. Then translate the non-cylindrical solution along the cylinder and again use interior estimates to find convergence to a new solution,  and do the translation in such a way that the difference of the limit to the cylindrical solution attains an interior maximum. This contradicts the strong maximum principle.)

Thus, we have shown that the corresponding flows $(K^R_t)_{t\in [a,a+\tau]}$ of the open sets starting from $K^R$ satisfy
\[
	\bigcup_{R>1} K^R_t = B_t \times \R, \hspace{0.5cm} \text{for } t\in [a,a+\tau].
\]

Let $(W_t)_{t\in [a,b]}$ be the smallest weak solution in $\R^{n+1}$ with $W_a= B_a\times \R$. The argument above has shown $W_t=B_t\times \R$ for $t\in [a,a+\tau]$. The same argument shows, that the maximal time-interval on which $(W_t)$ and $(B_t\times \R)$ coincide is open. It is easy to see that this maximal interval is also closed: Suppose $W_t=B_t\times \R$ for $t<t_0$. Since $(W_t)$ is the smallest solution $W_{t_0}\subset B_{t_0}\times \R$. To show the reverse inclusion let $x\in B_{t_0}\times \R$. Since $\bigcup_{t\in (a,b)} B_t\times \R \times \{t\}$ is open we find a ball-solution of mean curvature flow centred at $x$ that is contained in $(B_t\times \R)_t$ and contains $(x,t_0)$. This shows $x\in W_{t_0}$ since $W_t=B_t\times \R$ for $t<t_0$ and $W_t$ is a supersolution and hence contains the ball-solution.

Summarizing we find $W_t=B_t\times \R$ for all $t\in [a,b]$, i.\,e. the smallest weak solution starting from $B_a\times \R$ is $(B_t\times \R)_t$. As $A_t\times \R$ is a supersolution with $B_a\times \R\subset A_a\times \R$ we conclude $B_b\times \R \subset A_b\times \R$.
\end{proof}

A shadowflow is a weak solution:
\begin{theorem}
 Let $u\colon \ol{\O}\times [0,\infty)\to [-\infty,\infty]$ be continuous and smooth in the set $\{(x,t)\in \O\times (0,\infty)\colon |u(x,t)|<\infty\}$ and suppose $u$ satisfies \eqref{eq GMCF} in this set.
 Define $A_t$ to be the projection of $\graph u(\cdot,t) \cap \R^{n+1}$ onto $\ol{\O}$, that is $$A_t= \{x\in \ol{\O}\colon |u(\cdot,t)|<\infty\}.$$

 Then $(A_t)_{t\in [0,\infty)}$ is a weak solution of mean curvature flow with boundary values $(\{x\in \p \O \colon |u(x,t)|<\infty\})_{t\in [0,\infty)}$.
\end{theorem}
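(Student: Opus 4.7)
The main tool is Lemma \ref{lem weak cylinder} (applied to $\O\times\R$, which inherits the mean-convexity hypothesis of Theorem \ref{thm existence} because the extra principal curvature vanishes): it suffices to show that $(A_t\times\R)_t$ is a weak solution of mean curvature flow in $\ol{\O}\times\R$. The key barriers will be the vertical translates $\graph(u_R\pm s)$ of the smooth approximations $u_R$ from the proof of Theorem \ref{thm existence}; by the translation invariance of \eqref{eq GMCF} these are classical smooth MCF hypersurfaces on $\O_R$, and $u_R\to u$ locally uniformly on $\{|u|<\infty\}$ while the extended convergence $\Phi\circ u_R\to\Phi\circ u$ is locally uniform on all of $\ol{\O}\times[0,\infty)$.

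\emph{Supersolution.} Let $(B_t)_{t\in[a,b]}$ be a classical MCF with $B_t\Subset\O\times\R$ and $\ol{B_a}\subset A_a\times\R$. Put $Y:=\sup\{|y|:(x,y)\in\ol{B_t},\ t\in[a,b]\}<\infty$ and $M:=\sup_{\pi(\ol{B_a})}|u(\cdot,a)|<\infty$ (finite since $\pi(\ol{B_a})$ is a compact subset of the open set $A_a$); set $s:=M+Y+2$. For $R$ large, $|u_R(\cdot,a)|\le M+1$ on $\pi(\ol{B_a})$ and the compact tube $\bigcup_t\ol{B_t}$ lies inside $\O_R\times\R$ at uniform positive distance from $\p\O_R\times\R$. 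Consequently $\graph(u_R(\cdot,a)+s)$ lies strictly above $\ol{B_a}$ and $\graph(u_R(\cdot,a)-s)$ strictly below. The classical avoidance principle for the compact boundaryless $\p B_t$ against each translated graph (whose only boundary, on $\p\O_R\times\R$, stays uniformly separated from $\ol{B_t}$) preserves both strict inequalities on $[a,b]$, giving $|u_R(x,t)|<Y+s$ on $\pi(\ol{B_t})$ uniformly in large $R$. Sending $R\to\infty$ forces $|u(\cdot,t)|\le Y+s$ there, so $\pi(\ol{B_t})\subset A_t$ and $\ol{B_t}\subset A_t\times\R$.

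\emph{Subsolution.} Here $\ol{B_a}\subset(\ol{\O}\setminus\ol{A_a})\times\R$, so on a connected open neighbourhood $V\Supset\pi(\ol{B_a})$ the function $u(\cdot,a)$ equals $+\infty$ (without loss of generality). By the extended local uniform convergence, $u_R\to+\infty$ uniformly on $V\times\{a\}$. Choosing $s_R\to\infty$ such that $\graph(u_R(\cdot,a)-s_R)$ still lies strictly above $\ol{B_a}$ at time $a$, the same avoidance argument yields $u_R(x,t)\ge -Y+s_R$ on $\pi(\ol{B_t})$, hence in the limit $u(\cdot,t)=+\infty$ pointwise on $\pi(\ol{B_t})$. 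To upgrade this to an open neighbourhood — which is what puts $\ol{B_t}$ inside the open set $(\ol{\O}\setminus\ol{A_t})\times\R$ — I first thicken $B_a$ to an open $B'_a\Supset\ol{B_a}$ with $\p B'_a\in C^\infty$ and $\ol{B'_a}\subset(\ol{\O}\setminus\ol{A_a})\times\R$; by continuous dependence of smooth MCF on initial data, the classical solution $(B'_t)$ exists on $[a,b]$ with $\ol{B_t}\Subset B'_t$. Applying the previous step to $(B'_t)$ gives $u(\cdot,t)=+\infty$ on the open set $\pi(B'_t)\supset\pi(\ol{B_t})$, so $\pi(\ol{B_t})$ lies in the interior of $\{u(\cdot,t)=+\infty\}\subset\ol{\O}\setminus\ol{A_t}$.

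The main obstacle is the application of the avoidance principle to the non-compact $\graph(u_R\pm s)$, which has a lateral boundary on $\p\O_R\times\R$; this is overcome by choosing $R$ so large that this boundary is quantitatively separated from the compact track $\bigcup_t\ol{B_t}$, which lets the standard distance-function argument for two smooth classical MCFs go through unchanged. A secondary subtlety is the thickening step in the subsolution paragraph, needed because the raw barrier argument only produces a pointwise identity $u=+\infty$ on $\pi(\ol{B_t})$ whereas the definition of weak subsolution requires an open condition.
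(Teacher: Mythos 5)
There is a genuine gap: as written, your argument does not prove the theorem that is stated, because it only applies to the particular solution produced in the proof of Theorem \ref{thm existence}. Both your supersolution and subsolution steps hinge on having the auxiliary solutions $u_R$ on $\O_R$ with $u_R\to u$ (pointwise, resp.\ $\Phi\circ u_R\to\Phi\circ u$ locally uniformly, and even that only along a subsequence). The theorem, however, is stated for an \emph{arbitrary} continuous $u\colon \ol{\O}\times[0,\infty)\to[-\infty,\infty]$ that is smooth and satisfies \eqref{eq GMCF} where it is finite; for such a $u$ no approximating family converging to it is given, and none can be manufactured in general: by the paper's own grim-reaper remark solutions are not unique, so rerunning the construction of Theorem \ref{thm existence} with initial data $u(\cdot,0)$ may converge to a different solution than the given $u$ (the translating solution $\hat u$ satisfies all hypotheses of the theorem but is \emph{not} the limit of the $u_R$). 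For such $u$ the barriers $\graph(u_R\pm s)$ control the wrong function and your sandwich/limit step says nothing about $u$. A second, minor, point: in the subsolution step the reduction ``without loss of generality $u(\cdot,a)=+\infty$ on a connected neighbourhood of $\pi(\ol{B_a})$'' needs a word, since $B_a$ need not be connected and different components of $\ol{\O}\setminus\ol{A_a}$ may carry $+\infty$ and $-\infty$; one should treat the connected components of the spacetime track separately.

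The repair is to drop the approximation layer and compare the compact classical flow $(\p B_t)$ \emph{directly} with vertical translates of $\graph u(\cdot,t)$, which is exactly the paper's route after the (shared) reduction via Lemma \ref{lem weak cylinder}. For the supersolution property one argues by contradiction: if some $X\in B_b$ projected to a point where $u(\cdot,b)=+\infty$, translate $(B_t)$ vertically so that $\ol{B_a}$ lies strictly above $\graph u(\cdot,a)$ (possible because $u(\cdot,a)$ is finite, hence bounded, on the compact set $\pi(\ol{B_a})\subset A_a$, and $\ol{B_t}\Subset\O\times\R$ keeps the comparison away from $\p\O$); the avoidance principle then keeps $\ol{B_b}$ above $\graph u(\cdot,b)$, contradicting $u=+\infty$ over $X$. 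The subsolution direction is handled the same way by contradiction, using the openness of $B_b$ to move the offending point over $A_b$ and a vertical translation to place it on the graph, so your thickening step becomes unnecessary. Your remaining ingredients (reduction to the cylinder, vertical translations, keeping the lateral boundary of the comparison hypersurface quantitatively separated from the compact track, and the observation that the subsolution property is an open condition) are sound, and your argument would be essentially correct as a statement about the specific solution constructed in Section \ref{sec existence}; but to obtain the theorem as stated you must run the avoidance argument against $\graph u$ itself.
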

\begin{proof}
 By Lemma \ref{lem weak cylinder} it suffices to show that $A_t\times \R$ is a weak solution.

 First we prove that $(A_t\times\R)_t$ is a subsolution, which is slightly easier to see.
So let $(B_t)_{t\in [a,b]}$ be a family of open bounded subsets of $\R^{n+1}$ such that the boundaries $(\p B_t)_{t\in [a,b]}$ form a smooth solution of mean curvature flow.
Suppose $\ol{B_a}\subset (\O\setminus \ol{A_a})\times \R$. Assume for contradiction that there is $X\in B_b$ such that $X \notin (\O\setminus \ol{A_b})\times \R$. By openness of $B_b$ we may assume $X\in A_b\times \R$. Taking a vertical translation of $(B_t)$ we may further assume $X\in \graph u(\cdot,b)$. This contradicts the avoidance principle.

Now to see that $(A_t\times \R)_t$ is a supersolution, suppose $\ol{B_a}\subset A_a\times \R$ and assume for contradiction that there is $X\in B_b$ such that $X\notin A_b\times \R$.
W.\,l.\,o.\,g. we assume $u(X_1,\ldots,X_n,b)=+\infty$ and taking a vertical translation of $(B_t)$ we may assume $u(Y_1,\ldots,Y_n,a)<Y_{n+1}$ for all $Y\in \ol{B_a}$, i.\,e. $\ol{B_a}$ is above $\graph u(\cdot,a)$. But then by the avoidance principle $\ol{B_b}$ would be above $\graph u(\cdot,b)$ which leads to a contradiction.
\end{proof}

\section{Smoothing intersections while respecting curvature conditions} \label{sec smoothing}

We observe the following
\begin{lemma}\label{lem convex}
 Let $\Gamma\subset \R^{n}$ be an open or closed, symmetric and convex cone which contains the positive cone.
 Then the set of real symmetric $n\times n$-matrices with eigenvalues in $\Gamma$ is convex.
\end{lemma}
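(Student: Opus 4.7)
The set in question, $S_\Gamma := \{A \in \operatorname{Sym}_n(\R) : \lambda(A) \in \Gamma\}$, is already a cone, because eigenvalues are positively homogeneous ($\lambda(cA)=c\lambda(A)$ for $c\ge 0$). For a cone, convexity is equivalent to closure under addition, so it suffices to show that $\lambda(A),\lambda(B)\in \Gamma$ implies $\lambda(A+B)\in \Gamma$. I will first do this assuming $\Gamma$ closed; the open case then follows by noting $S_\Gamma = \operatorname{int}(S_{\ol\Gamma})$, using that eigenvalues depend continuously on $A$, and conversely (via conjugation by a fixed orthogonal matrix diagonalising $A$) any nearby eigenvalue vector is realised by a nearby symmetric matrix.

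For closed $\Gamma$, bipolar duality says $\lambda\in\Gamma$ iff $\langle v,\lambda\rangle\ge 0$ for all $v$ in the dual cone $\Gamma^* = \{v : \langle v,\mu\rangle\ge 0\;\forall\mu\in\Gamma\}$. The cone $\Gamma^*$ inherits permutation-invariance from $\Gamma$, and the hypothesis $\Gamma\supset\Gamma_+$ forces $\Gamma^* \subset \Gamma_+^* = \Gamma_+$, so every $v\in \Gamma^*$ has nonnegative entries. Since $Pv\in\Gamma^*$ for every permutation $P$, the rearrangement inequality gives
\[
 \min_P \langle Pv,\lambda\rangle \;=\; \langle v^\uparrow, \lambda^\downarrow\rangle,
\]
with $v^\uparrow$ sorted nondecreasing and $\lambda^\downarrow$ sorted nonincreasing. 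Thus $\lambda\in\Gamma$ iff $\langle u, \lambda^\downarrow\rangle\ge 0$ for every nonnegative, nondecreasing $u$ of the form $v^\uparrow$, $v\in\Gamma^*$.

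The crux is then to show that for any nondecreasing $u = (u_1\le\cdots\le u_n)$ the functional $A\mapsto \langle u,\lambda^\downarrow(A)\rangle$ is concave on $\operatorname{Sym}_n(\R)$. Setting $s_k(A) := \lambda_1^\downarrow(A)+\cdots+\lambda_k^\downarrow(A)$, Abel summation gives
\[
 \langle u,\lambda^\downarrow(A)\rangle \;=\; u_n\operatorname{tr}(A) \;-\; \sum_{k=1}^{n-1}(u_{k+1}-u_k)\, s_k(A),
\]
with all coefficients $u_{k+1}-u_k \ge 0$. By Ky Fan's maximum principle, $s_k(A) = \max\{\operatorname{tr}(U^\top A U): U\in\R^{n\times k},\; U^\top U = I_k\}$ is a supremum of linear functionals, hence convex in $A$. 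So the right-hand side is a linear term minus a nonnegative combination of convex functions, i.e.\ concave.

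Concavity together with positive $1$-homogeneity $f(cA)=cf(A)$ gives superadditivity $f(A+B)=2f(\tfrac{A+B}{2})\ge f(A)+f(B)$, so
\[
 \langle u,\lambda^\downarrow(A+B)\rangle \;\geq\; \langle u,\lambda^\downarrow(A)\rangle + \langle u,\lambda^\downarrow(B)\rangle \;\geq\; 0,
\]
which proves $\lambda(A+B)\in\Gamma$. The main subtlety is the sign bookkeeping in the Abel summation: the hypothesis $\Gamma\supset\Gamma_+$ is precisely what places the dual inside $\Gamma_+$, and combined with testing against $\lambda^\downarrow$ this produces the sign pattern under which Ky Fan's inequality yields concavity (rather than convexity) of $A\mapsto \langle u,\lambda^\downarrow(A)\rangle$.
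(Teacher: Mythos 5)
Your proof is correct, but it takes a genuinely different route from the paper. The paper argues via the signed distance function $f$ to $\p\Gamma$: by convexity, symmetry and the inclusion $\Gamma_+\subset\Gamma$ this $f$ is concave, symmetric and monotone, so by the Caffarelli--Nirenberg--Spruck result the matrix function $F(A)=f(\lambda(A))$ is concave, and the set in question is the superlevel set $\{F>0\}$, hence convex; the open and closed cases are absorbed into the choice of $f$. You instead observe that the set is a cone, reduce convexity to closure under addition, dualize ($\lambda\in\Gamma$ iff $\langle v^\uparrow,\lambda^\downarrow\rangle\ge 0$ for all $v$ in the permutation-invariant dual cone, via the rearrangement inequality), and prove concavity, hence superadditivity, of $A\mapsto\langle u,\lambda^\downarrow(A)\rangle$ for nondecreasing $u$ by Abel summation and Ky Fan's maximum principle for the partial sums $s_k$; the open case is then a clean topological reduction using $S_\Gamma=\operatorname{int}(S_{\ol\Gamma})$. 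What your argument buys is self-containedness: it replaces the citation of CNS by an elementary and transparent eigenvalue-sum estimate (essentially Ky Fan's majorization $\lambda^\downarrow(A+B)\prec\lambda^\downarrow(A)+\lambda^\downarrow(B)$), and it exhibits the convexity as closure of a cone under addition. What the paper's route buys is brevity and a single device that treats open and closed $\Gamma$ uniformly. One small inaccuracy in your closing remark: the hypothesis $\Gamma\supset\Gamma_+$ is not what makes the sign bookkeeping work --- the nonnegativity of $v\in\Gamma^*$ is never used, only the monotone ordering of $v^\uparrow$ against $\lambda^\downarrow$, so your argument in fact proves the statement for every symmetric convex cone; this does not affect correctness, since extra hypotheses are harmless, but the attribution of where positivity enters is misleading.
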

\begin{proof}
 Let $\Gamma$ be open. (The case of closed $\Gamma$ is handled similar.) Let $f\colon \R^{n}\to \R$ be the signed distance function to $\p \Gamma$ (which we assume to be nonempty) such that $\lambda\in \Gamma \iff f(\lambda)>0$. By the convexity of $\Gamma$, $f$ is concave. By the symmetry of $\Gamma$, $f$ is symmetric. And because $\Gamma$ contains the positive cone $f$ is increasing in each component of its argument. Then $F(A):= f(\lambda(A))$ is a concave function on the set of symmetric matrices, where $\lambda(A)$ denotes the eigenvalues of $A$ (see e.\,g. \cite[end of §3]{CNS}). Thus $\{A\colon F(A)>0\}$ is a convex subset of the symmetric matrices, which finishes the proof.
\end{proof}
\begin{proof}[Proof of Theorem \ref{thm smoothing}]
 The idea is to use distance functions and take a mollified version of $\min$ (denoted $\widetilde{\min}$) and to define $$\O:= \{\widetilde{\min}(\dist_{\p A},\dist_{\p B})>0\}$$ though we will not directly use the distance functions.

 {\bf 1.\ Altered distance functions and reference neighbourhoods.}
 First note that since $A\cap B$ is bounded it suffices to consider everything in a large ball. Let $d_A\in C^\infty(\R^n)$ be such that $d_A<0$ in $\R^n\setminus \ol{A}$ and in our large ball we have $d_A>0$ in $A$ and $d_A$ coincides with the signed distance function in a tubular neighbourhood of $\p A$.
 Let $g\in C^\infty(\R)$ be such that $g(0)=0,\,1\leq g'\leq 2$ and $g''(s)\leq -C$ for $|s|<\eps(C)$, where we choose $C>0$ later, and set $a:= g\circ d_A$.
 We derive
 \begin{align*}
  Da&= g'(d_A)\,Dd_A,\\
  D^2a&= g''(d_A)\,Dd_A \otimes Dd_A + g'(d_A)\,D^2d_A.
 \end{align*}
 In a tubular neighbourhood (in our large ball) $Da$ is an eigenvector of $D^2 a$ with eigenvalue $g''(d_A)$. The remaining eigenvalues are $g'(d_A)$ times the eigenvalues of $D^2 d_A$ which are
 \[
  \frac{-\kappa_i \circ \pi}{1-d_A\cdot \kappa_i\circ \pi} \hspace{0.5cm} (i=1,\ldots,n-1).
 \]
 Here $\kappa_i$ are the principal curvatures at the boundary and $\pi$ denotes the closest point projection onto the boundary.
 Note that inside $\ol{A}$ these eigenvalues of $D^2 d_A$ are not greater than the negated principle curvatures of the boundary. Because $\Gamma$ contains the positive cone, we find the negation of the eigenvalues of $D^2 a$ which correspond to eigenvectors orthogonal to $Da$ lie in $\Gamma$.

 Now choose $C>0$ from above such that the eigenvalue of $D^2 a$ which corresponds to the eigenvector $Da$ is the smallest (largest in absolute value) eigenvalue of $D^2 a$ in a neighbourhood of $\p A$ (still restricted to a large ball). We refer to this neighbourhood restricted to $\ol{A}$ as the reference neighbourhood of $\p A$ (in our large ball). In the reference neighbourhood the negations of the eigenvalues of $D^2 a|_V$ are in $\Gamma$ for any $(n-1)$-dimensional hyperplane $V$.

 Analogously we define $b$ with respect to $\p B$ and the reference neighbourhood of $\p B$.

 {\bf 2.\ Construction.}
 Let $f\in C^\infty(\R)$ be a function with the following properties
 \begin{enumerate}[(i)]
  \item $\min\{s,0\}-1 < f(s)<\min\{s,0\}$ for $|s|<1$,  
  \item $f(s)= \min \{s,0\}$ for $|s|\geq 1$,
  \item $0\leq f' \leq 1$,
  \item $f''\leq 0$.
 \end{enumerate}
 Define
 \begin{equation} \label{eq def of mollified min}
  \begin{split}
  \Phi \colon (0,1) \times \R^n &\to \R, \\
  (\delta,x) & \mapsto
  \delta f \left(\frac{a(x)-b(x)}{\delta}\right)+b(x).
  \end{split}
 \end{equation}
 This is a mollified version of $\min(a,b)$ with parameter $\delta$. In fact
 \begin{equation}
  \min\{a(x),b(x)\}-\delta < \Phi(\delta,x)\leq \min\{a(x),b(x)\}
  \label{eq Phi and min}
 \end{equation}
 for all $x\in \R^n$ and $\delta \in (0,1)$. We will choose $\O$ from the assertion of the form
 $$\O_\delta :=\{x\in \R^n \colon \Phi(\delta,x)>0\}$$
 for an appropriate choice of $\delta$.

 {\bf 3.\ Inclusions}. Because of $A\cap B = \{\min(a,b)>0\}$ it is obvious from \eqref{eq Phi and min} that $\O_\delta \subset A\cap B$ holds for all $\delta\in (0,1)$. To check the other inclusion let $x\in A\cap B\setminus (\p A\cap \p B)_\eps$.
 By continuity there is $0<\delta_1=\delta_1(\eps)$ independent of $x$, such that
 \begin{equation}\label{eq delta_1}
  \max\{a(x),b(x)\}>\delta_1.
 \end{equation}
 Now choose $\delta \leq \delta_1/2$ and distinguish two cases: Suppose $|a(x)-b(x)|>\delta$. Then by property (ii) of $f$ we find
 \[
  \Phi(\delta,x)= \min\{a(x),b(x)\}>0.
 \]
 If on the other hand $|a(x)-b(x)|\leq \delta$ then by \eqref{eq Phi and min}, \eqref{eq delta_1}, and $\delta\leq \delta_1/2$
 \[
  \Phi(\delta,x)> \min\{a(x),b(x)\}-\delta \geq \delta_1-2\delta \geq 0.
 \]
 In summary we find the claimed inclusions, provided that $\delta$ is sufficiently small.

 {\bf 4.\ Smoothness.}
 We compute
 \[
  D\Phi= \left (f\left (\frac{a-b}{\delta}\right )
  -f'\left (\frac{a-b}{\delta}\right )\frac{a-b}{\delta},
  \, f'\left (\frac{a-b}{\delta}\right )(Da-Db)+Db \right ).
 \]
 Assuming $\Phi=0$ implies
 \[ f\left (\frac{a-b}{\delta}\right )=-\frac{b}{\delta},\]
 and therefore
 \[
  \p_\delta \Phi= 0
  \iff
  b+\underbrace{f'\left (\frac{a-b}{\delta}\right )}_{\in [0,1]}(a-b)
  =0.
 \]
 This occurs only if one of the following is true (note that $\Phi=0$ already implies $a,b\geq 0$, by \eqref{eq Phi and min})
 \begin{enumerate}[(I)]
  \item $a=0$ and $b=0$
  \item $a=0$ and $f'(\tfrac{a-b}{\delta})=1$
  \item $b=0$ and $f'(\tfrac{a-b}{\delta})=0$.
 \end{enumerate}
 (I) implies $\Phi\neq 0$, a contradiction. In case (II) we find $\p_x \Phi= Da \neq 0$, because $Da \neq 0$ on $\p A$, which is where $a=0$ holds.
 Case (III) is treated analogously to case (II).

 Summarizing we obtain $D \Phi \neq 0$ where $\Phi=0$.
 The implicit function theorem shows, that $\Phi^{-1}(0)$ is a smooth $n$-dimensional submanifold of $(0,1)\times \R^n$ with normal $\frac{D\Phi}{|D\Phi|}$.

 By applying Sard's theorem to the mapping $\Phi^{-1}(0)\to (0,1), \, (\delta,x)\mapsto \delta$ one can show, that for almost all $\delta\in (0,1)$, $\p_x \Phi(\delta,\cdot)\neq 0$ where $\Phi(\delta,\cdot)=0$, so that by the implicit function theorem again $\O_\delta$ is smooth for almost all $\delta \in (0,1)$ and $\p \O_\delta = \{x\in \R^n\colon \Phi(\delta,x)=0\}.$

 We choose $\O:= \O_{\delta_0}$ for such an $\delta_0\in (0,1)$ sufficiently small, that the asserted inclusions hold.

 {\bf 5.\ Curvature condition.}
 Let $x_0\in \p \O$ and assume without loss of generality $x_0=0$ and further assume that the tangent space of $\p \O$ at $x_0=0$ is orthogonal to $e_n$. We identify the tangent space and $\R^{n-1}\equiv \{(\hat{x},x^n)\in \R^n\colon x^n=0\}$. Locally $\p \O$ is a graph over the tangent space at $x_0$: Let $w\in C^\infty(\R^{n-1})$ and $r>0$ with
\[
  B_r (0)\cap \O = B_r (0)\cap
  \{(\hat{x},x^n)\in \R^n \colon w(\hat{x})<x^n\}.
\]

We write $\Phi_{\delta_0}\equiv \Phi(\delta_0,\cdot)$ and $\hat{D}$ for differentiation with respect to the first $n-1$ components.

The following holds in a neighbourhood of $0\in \R^{n-1}$:
\begin{align}
 \Phi_{\delta_0}(\hat{x},w(\hat{x}))&=0, \label{eq Phi=0}\\
 \hat{D}w(0)&=0, \label{eq Dw=0}\\
 \p_n\Phi_{\delta_0}(0)&= |D\Phi_{\delta_0}(0)|. \label{eq p_n Phi}
\end{align}

Differentiating \eqref{eq Phi=0} twice and using \eqref{eq Dw=0} and \eqref{eq p_n Phi} we obtain
\[
 \hat{D}^2 w(0)
  =-\frac{\hat{D}^2\Phi_{\delta_0}(0)}{|D\Phi_{\delta_0}(0)|} 
\]
Note that the eigenvalues of $\hat{D}^2 w(0)$ are the principle curvatures of $\p \O$ at $x_0$. Hence, it remains to prove that the negated eigenvalues of $\hat{D}^2 \Phi_{\delta_0}(0)$ lie in $\Gamma$.

We compute
\begin{equation} \label{eq 2nd derivative Phi}
\begin{split}
 \hat{D}^2 \Phi_{\delta_0}
  &= \frac{1}{\delta_0}f''\left(\frac{a-b}{\delta_0}\right)
    (\hat{D}a-\hat{D}b)\otimes(\hat{D}a-\hat{D}b)\\
  &\quad  + f'\left(\frac{a-b}
  {\delta_0}\right)\left(\hat{D}^2a-\hat{D}^2b\right)+\hat{D}^2b.
  \end{split}
\end{equation}
 The matrix $(\hat{D}a-\hat{D}b)\otimes(\hat{D}a-\hat{D}b)$ is positive semi-definite and $f''\leq 0$ (property (iv)). Since $\Gamma$ contains the positive cone it suffices to consider the second term in \eqref{eq 2nd derivative Phi}. We distinguish three cases, below. But first we observe the following: We may assume that $\eps>0$ is so small that $(\p A\cap \p B)_\eps \cap (A\cap B)$ is contained in the intersection of the reference neighbourhoods of $\p A$ and $\p B$. Then $x_0=0\in \p \O$ is in $\p A$ or $\p B$ or in the reference neighbourhoods of both $\p A$ and $\p B$.
\begin{enumerate}[(I)]
 \item
  $a(0)-b(0)\geq \delta_0$: Then
  \[
   f'\left(\frac{a(0)-b(0)}{\delta_0}\right)=0.
  \]
  The term in question becomes $\hat{D}^2 b$. Moreover, in this case $0=\Phi_{\delta_0}(0)=b(0)$ holds. That is why we are in the reference neighbourhood of $\p B$, where the negations of the eigenvalues of $\hat{D}^2 b$ lie in $\Gamma$.
 \item
  $a(0)-b(0)\leq -\delta_0$: This case is treated similiar.
\item
  $|a(0)-b(0)|<\delta_0$: Here, by property (i) of $f$, $0\notin \p A,\p B$. Therefore $0$ must be in the reference neighbourhoods of $\p A$ and $\p B$.
  As a convex combination of two matrices, whose negated eigenvalues lie in $\Gamma$, the negated eigenvalues of
  \[
  f'\left(\frac{a-b}
  {\delta_0}\right)\left(\hat{D}^2a-\hat{D}^2b\right)+\hat{D}^2b,
  \]
the matrix in question, are in $\Gamma$ by Lemma \ref{lem convex}.
\end{enumerate}
In any case, the principal curvatures of $\p \O$ at the point $x_0=0$ lie in $\Gamma$.
\end{proof}

%-----------------------   bibliography   ----------------------------------------

\bibliographystyle{amsplain}

\begin{thebibliography}{1}

\bibitem{Belletini Novaga}
G.~Bellettini, M.~Novaga. \textit{Comparison results between minimal barriers and viscosity solutions for geometric evolutions.} Annali della Scuola Normale Superiore di Pisa - Classe di Scienze 26.1 (1998): 97-131.

\bibitem{CNS}
L.~Caffarelli, L.~Nirenberg, J.~Spruck, \textit{The Dirichlet Problem for nonlinear second order elliptic equations, III: Function of the eigenvalues of the Hessian}, Acta Math. 155 (1985), 261–301.

\bibitem{Q_k}
K.~Choi, P.~Daskalopoulos, \textit{The $Q_k$ flow on complete non-compact graphs}, arXiv:1603.03453v1 

\bibitem{K^alpha}
K.~Choi, P.~Daskalopoulos, L.~Kim, K.~Lee, \textit{The evolution of complete non-compact graphs by powers of Gauss curvature}, arXiv:1603.04286v1

\bibitem{E+H}
K.~Ecker and G.~Huisken, \textit{Interior estimates for hypersurfaces moving by mean curvature}, Invent. Math. 105 (1991), no. 3, 547-569.

\bibitem{Giusti}
E.~Giusti, \textit{Direct Methods in the Calculus of Variations}, World Scientific Publishing Co. Ptc. Ltd., 2003, ISBN 981-238-043-4

\bibitem{Hui}
G.~Huisken, \textit{Non-parametric Mean Curvature Evolution with Boundary Conditions}, Journal of Differential Equations 77, 369-378 (1989)

\bibitem{Ilmanen}
T.~Ilmanen, \textit{The Level-set Flow on a Manifold}, Proc. Symp. Pure Math. 54 Pt. 1 (1993) 193-204.

\bibitem{MCF without Singularities}
M.~Sáez Trumper and O.~Schn\"urer, \textit{Mean curvature flow without singularities}, J. Differential Geom. 97 (2014), no. 3, 545-570.

\bibitem{Topology Change}
B.~White, \textit{The Topology of Hypersurfaces Moving by Mean Curvature}, Communication in Analysis and Geometry Vol. 3, Number 2, 317-333, 1995.

\end{thebibliography}

\end{document}